\theoremstyle{definition}
\newtheorem{theorem}{Theorem}[section]
\newtheorem{prop}[theorem]{Proposition}
\newtheorem{defn}[theorem]{Definition}
\newtheorem{cor}[theorem]{Corollary}
\newtheorem{eg}[theorem]{Example}
\newtheorem{lemma}[theorem]{Lemma}
\newtheorem{remark}[theorem]{Remark}
\numberwithin{equation}{section}
\DeclareMathOperator{\Ima}{Im}
\DeclareMathOperator{\Ext}{Ext}
\DeclareMathOperator{\LCM}{LCM}
\newcommand{\Z}{\mathbb{Z}}
\newcommand{\R}{\mathbb{R}}
\begin{document}

\title{Discrete Morse Theory for Weighted Simplicial Complexes}

\author{Chengyuan Wu\textsuperscript{*}}
\address{Department of Mathematics, National University of Singapore, Singapore 119076}
\email{wuchengyuan@u.nus.edu}
\thanks{\text{*}First authors. The project was supported in part by the Singapore Ministry of Education research grant (AcRF Tier 1 WBS No.~R-146-000-222-112). The first author was supported in part by the President's Graduate Fellowship of National University of Singapore. The second author was supported by the Postdoctoral International Exchange Program of China, 2019 project from The Office of China Postdoctoral Council, China Postdoctoral Science Foundation. The third author was supported by Natural Science Foundation of China (NSFC grant no.\ 11971144) and High-level Scientific Research Foundation of Hebei Province. The fourth author was supported by Nanyang Technological University Startup Grants M4081842, Singapore Ministry of Education Academic Research Fund Tier 1 RG31/18, Tier 2 MOE2018-T2-1-033.}

\author{Shiquan Ren\textsuperscript{*}}
\address{Yau Mathematical Sciences Center, Tsinghua University, Beijing 100084, China}
\email{srenmath@tsinghua.edu.cn}

\author{Jie Wu\textsuperscript{*}}
\address{School of Mathematical Sciences, Hebei Normal University, Hebei 050024, China}
\email{wujie@hebtu.edu.cn}

\author{Kelin Xia\textsuperscript{*}}
\address{(a) Division of Mathematical Sciences, School of Physical and Mathematical Sciences, Nanyang Technological
University, Singapore 637371 \\
(b) School of Biological Sciences, Nanyang Technological University, Singapore 637371}
\email{xiakelin@ntu.edu.sg}


\subjclass[2010]{Primary 55N35; Secondary 55U10}



\keywords{Discrete Morse Theory, Weighted Simplicial Complexes, Weighted Homology, Algebraic topology}

\begin{abstract}
In this paper, we study Forman's discrete Morse theory in the context of weighted homology. We develop weighted versions of classical theorems in discrete Morse theory. A key difference in the weighted case is that simplicial collapses do not necessarily preserve weighted homology. We work out some sufficient conditions for collapses to preserve weighted homology, as well as study the effect of elementary removals on weighted homology. An application to sequence analysis is included, where we study the weighted ordered complexes of sequences.
\end{abstract}

\maketitle
\section{Introduction}
In 1995, Robin Forman introduced discrete Morse theory in the seminal paper \cite{forman1998morse}. Subsequently, a expository user's guide to discrete Morse theory was written by Forman in \cite{forman2002user}. Since then, there have been numerous applications of discrete Morse theory in a wide range of subjects \cite{lewiner2004applications,farley2005discrete,robins2011theory,gunther2012efficient}. A main theorem of discrete Morse theory \cite[p.~10]{forman2002user} allows us to reduce the number of cells in a CW complex, while preserving its homotopy type (and hence its homology).

The weighted homology of simplicial complexes was first studied by Robert J.\ Dawson in \cite{Dawson1990}, and subsequently generalized by S.\ Ren, C.\ Wu and J.\ Wu in \cite{ren2018weighted,ren2017further}. Weighted homology can be incorporated into persistent homology to analyze weighted data \cite{ren2018weighted}. In \cite{wu2018weighted}, weighted (co)homology and weighted Laplacian was studied by C.\ Wu, S.\ Ren, J.\ Wu and K.\ Xia, with applications to biomolecules and network motifs.

With the help of suitable weights, weighted homology is able to distinguish between simplicial complexes that are homotopy equivalent. We illustrate this in Example \ref{eg:nopreserve}. Classically, constructions in algebraic topology such as the homology or homotopy functors are designed to be homotopy invariants, meaning that they do not distinguish between spaces that are homotopy equivalent. In applications, simplicial complexes which are homotopy equivalent may have different meanings. For instance, in the context of collaboration networks \cite{Carstens2013,newman2001structure}, a 2-simplex may represent 3 scientists A, B, C where each pair of scientists have a joint 2-author paper, and furthermore all three scientists have a joint 3-author paper. On the other hand, a 0-simplex may represent a single scientist with a 1-author paper. Hence, despite being homotopy equivalent, the 2-simplex and the 0-simplex have quite different meanings in this case. Weighted homology can supplement traditional topological data analysis methods \cite{bubenik2015statistical,zomorodian2012topological} by giving an option to distinguish between homotopy equivalent simplicial complexes when necessary.

In this paper, we combine the concepts of discrete Morse theory with the theory of weighted homology. The goal is to develop weighted versions of classical theorems in discrete Morse theory.

In Section \ref{sec:weightedhom}, we give a brief summary of weighted homology. In Section \ref{sec:collapse}, we study collapses of weighted simplicial complexes and their effect on weighted homology. In Theorem \ref{thm:sameweight}, we give a sufficient condition for collapses to preserve weighted homology, which will help in our subsequent study of weighted discrete Morse theory in Section \ref{sec:wdmorse}. As an application, we apply weighted discrete Morse theory to the study of sequences (including DNA/RNA sequences) via weighted ordered complexes in Section \ref{sec:sequence}.

\subsection{Related Work}

In \cite{jollenbeck2009minimal}, M.~J\"{o}llenbeck and V.~Welker study Forman's discrete Morse theory from an algebraic viewpoint. An analogous theory was
developed independently by E.~Sk\"{o}ldberg \cite{skoldberg2006morse}. The authors in \cite{jollenbeck2009minimal} consider chain complexes $C_\bullet=(C_i,\partial_i)_{i\geq 0}$ of free modules $C_i$ over a ring $R$. Subsequently, the complex $C_\bullet$ is viewed as a directed weighted graph, where the vertex set is given by the chosen basis of $C_\bullet$. The weight of the edge $e: c\to c'$ is then given by the nonzero coefficient $[c:c']\in R$ in the differential of $c$. This has some similarities to our definition of weighted boundary map in Definition \ref{def:wboundary}. Overall, the content and focus of \cite{jollenbeck2009minimal,skoldberg2006morse} is significantly different from our paper.

In \cite{salvetti2013combinatorial}, M.~Salvetti and A.~Villa study the twisted cohomology of Artin groups using discrete Morse theory. The theory is further developed in \cite{paolini2017weighted}. They define a weighted sheaf $(K,R,w)$ where $R$ is a ring and $w$ is a map between the face poset of $K$ and the ring $R$ satisfying a divisibility relation in $R$: $\sigma\subset\tau\implies w(\sigma)\mid w(\tau)$. This is the same as our definition of a weighted simplicial complex (Definition \ref{wscdef}). In their paper, the main object of study is the homology $H_*(L_*)$ of an algebraic complex $L_*(K)$ \cite[p.~1160]{salvetti2013combinatorial}. Their usage of the weights $w(\sigma)$ is in the definition of $L_*(K)$. In our paper, the main object of study is the weighted homology of the simplicial complex $H_*(K,w)$, while our usage of weights $w(\sigma)$ is in the definition of the weighted boundary map (Definition \ref{def:wboundary}). Hence, our paper is significantly different from \cite{salvetti2013combinatorial,paolini2017weighted} with respect to the main object of study, the usage of weights, and the general context of the paper. It is interesting to remark that the condition for weighted matching in \cite[p.~1158]{salvetti2013combinatorial} that two matched elements must have the same weight resembles our condition for elementary collapses to preserve weighted homology (Theorem \ref{thm:sameweight}).

Other papers involving usage of weights and discrete Morse theory include \cite{du2018rgb}, where weights are applied to different colors in the Red-Green-Blue (RGB) encoding. Discrete Morse theory is then used in combination with persistent homology for data analysis. In \cite{reininghaus2010tadd}, discrete Morse theory is used to extract the extremal structure of scalar and vector fields on 2D manifolds embedded in $\R^3$. Weights $\omega: E\to\R$ are assigned to the edges of the cell graph, followed by computing the sequence of maximum weight matchings. An algorithmic pipeline computes a hierarchy of extremal structures, where the hierarchy is defined by an
importance measure and enables the user to select an appropriate level of detail.

\section{Weighted Homology}
\label{sec:weightedhom}
In this section, we outline the main definitions and results in weighted homology. Weighted homology of simplicial complexes, together with their categorical properties, was first studied by Robert J.~Dawson \cite{Dawson1990}, where weights take integer values. In \cite{ren2018weighted}, the authors generalize the weights to take values in an integral domain $R$. It should be remarked that weighted homology is a generalization of the usual simplicial homology. When all weights are equal and nonzero, the weighted homology is isomorphic to the usual simplicial homology (see Proposition \ref{prop:constantweighting}). 

Following the context in \cite[p.~2672]{ren2018weighted}, we require $R$ to be an integral domain (with unity) when discussing weighted homology.

\begin{defn}[cf.\ {\cite[p.~229]{Dawson1990},\cite[p.~2666]{ren2018weighted}}] 
\label{wscdef}
Let $R$ be an integral domain. A \emph{weighted simplicial complex} (or \emph{WSC} for short) is  a pair $(K,w)$ consisting of a simplicial complex $K$ and a  function $w: K\to R$, such that for any $\sigma_1, \sigma_2\in K$ with $\sigma_1\subseteq \sigma_2$, we have $w(\sigma_1)\mid w(\sigma_2)$.
\end{defn}

\begin{defn}[cf.\ {\cite[p.~2673]{ren2018weighted}}]
\label{chaingroup}
Let $R$ be an integral domain. Let $C_n(K,w)$ be the free $R$-module with basis the $n$-simplices of $K$ with nonzero weight. Elements of $C_n(K,w)$, called $n$\emph{-chains}, are finite formal sums $\sum_\alpha n_\alpha\sigma_\alpha$ with coefficients $n_\alpha\in R$ and $\sigma_\alpha\in K$.
\end{defn}

\begin{defn}[cf.\ {\cite[p.~234]{Dawson1990},\cite[p.~2674]{ren2018weighted}}]
\label{def:wboundary}
The \emph{weighted boundary map} $\partial_n^w: C_n(K,w)\to C_{n-1}(K,w)$ is the map (extended $R$-linearly): \[\partial_n^w(\sigma)=\sum_{i=0}^n\frac{w(\sigma)}{w(d_i(\sigma))}(-1)^id_i(\sigma)\] where the \emph{face maps} $d_i$ are defined as: \[d_i(\sigma)=[v_0,\dots,\widehat{v_i},\dots,v_n]\qquad\text{(deleting the vertex $v_i$)}\]  for any $n$-simplex $\sigma=[v_0,\dots,v_n]$. 
\end{defn}

\begin{defn}[cf.\ {\cite[p.~2677]{ren2018weighted}}]
We define the $n$th weighted homology group with coefficients in $R$ by
\begin{equation}
H_n(K,w;R):=\ker(\partial_n^w)/\Ima(\partial_{n+1}^w),
\end{equation}
where $\partial_n^w$ is the weighted boundary map defined in Definition \ref{def:wboundary}. For convenience, if there is no danger of confusion, we may simply write $H_n(K,w)$ to denote $H_n(K,w;R)$.
\end{defn}

\begin{prop}[{cf.\ \cite[p.~239]{Dawson1990},\cite[p.~2679]{ren2018weighted}}]
\label{prop:constantweighting}
For the constant weighting $(K,w)$, $w(\sigma)\equiv a\in R\setminus\{0\}$ for all $\sigma\in K$, the weighted homology functor is the same as the standard simplicial homology functor.
\qed
\end{prop}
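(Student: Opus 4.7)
The plan is to observe that under a constant, nonzero weighting the weighted chain complex collapses, term-by-term, onto the ordinary simplicial chain complex with coefficients in $R$, and then argue that this identification is functorial.

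First I would unpack the chain groups. Since $w(\sigma) \equiv a \neq 0$, every simplex of $K$ has nonzero weight, so by Definition \ref{chaingroup} the module $C_n(K,w)$ is the free $R$-module on all $n$-simplices of $K$, which coincides on the nose with the standard simplicial chain module $C_n(K;R)$. Next I would compute the weighted boundary on a basis element $\sigma$: by Definition \ref{def:wboundary},
\[
\partial_n^w(\sigma) \;=\; \sum_{i=0}^{n}\frac{w(\sigma)}{w(d_i(\sigma))}(-1)^i d_i(\sigma) \;=\; \sum_{i=0}^{n}\frac{a}{a}(-1)^i d_i(\sigma) \;=\; \sum_{i=0}^{n}(-1)^i d_i(\sigma),
\]
which is precisely the usual simplicial boundary operator. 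Since both sides are $R$-linear extensions of their values on the basis, the weighted and unweighted boundary maps agree as maps of $R$-modules. Consequently $\ker \partial_n^w = \ker \partial_n$ and $\Ima \partial_{n+1}^w = \Ima \partial_{n+1}$, and therefore
\[
H_n(K,w;R) \;=\; \ker \partial_n^w / \Ima \partial_{n+1}^w \;=\; \ker \partial_n / \Ima \partial_{n+1} \;=\; H_n(K;R).
\]

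To upgrade this equality of groups into an equality of functors, I would note that a morphism of constantly-weighted simplicial complexes is a simplicial map $f \colon (K,w_K) \to (L,w_L)$ that respects the weighting, and the induced chain map sends $\sigma$ to $f(\sigma)$ (or $0$, if $f(\sigma)$ is degenerate) exactly as in the ordinary simplicial setting, since all weight ratios in the formula for the induced map are again $a/a = 1$. Hence on both objects and morphisms the weighted homology functor restricted to constantly-weighted complexes is naturally identified with the usual simplicial homology functor with coefficients in $R$.

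There is no real obstacle here; the only point requiring any care is verifying that the domain of $C_n(K,w)$ really is all of $C_n(K;R)$, which rests squarely on the hypothesis $a \neq 0$ (otherwise every basis element would be discarded and the statement would fail). Once that is in hand, the proposition reduces to the observation that the correction factor $w(\sigma)/w(d_i(\sigma))$ in Definition \ref{def:wboundary} trivializes to $1$.
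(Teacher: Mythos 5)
Your proposal is correct and is exactly the standard verification: with $w\equiv a\neq 0$ the chain modules coincide with the ordinary ones and the correction factors $w(\sigma)/w(d_i(\sigma))=a/a=1$ trivialize, so the weighted boundary is the usual boundary. The paper omits the proof entirely (citing Dawson and Ren--Wu--Wu with a \qed), and your argument is the one those references and the \qed implicitly rely on, so there is nothing to compare beyond noting agreement.
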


A sample calculation of weighted homology can be found in \cite[p.~2678]{ren2018weighted}. Weighted homology can be effectively computed by the Smith Normal Form algorithm (cf.\ \cite{dumas2003computing}) for the weighted boundary matrices.

\section{Collapses of Weighted Simplicial Complexes}
\label{sec:collapse}
The notion of simplicial collapse was first introduced by J.H.C.\ Whitehead in \cite{whitehead1939simplicial}. Subsequently, simplicial collapse is noted to play a fundamental role in discrete Morse theory \cite[p.~12]{forman2002user}. In this section, we study collapses of weighted simplicial complexes and their effects on weighted homology. A key difference in the weighted case is that collapses do not necessarily preserve weighted homology.

Let $K$ be a finite (abstract) simplicial complex. We denote a $n$-dimensional simplex $\sigma\in K$ by $\sigma^{(n)}$.

\begin{defn}[cf.\ {\cite{forman2002user,forman1998morse}}]
For simplices $\sigma,\tau\in K$, we write $\sigma<\tau$ to indicate that $\sigma$ is a \emph{proper face} of $\tau$, that is, $\sigma\subsetneq \tau$. 

If $\sigma<\tau$ and $\sigma$ is not a proper face of any other simplex of $K$ (other than $\tau$), we say that $\sigma$ is a \emph{free face} of $\tau$ (in $K$).
\end{defn}

\begin{remark}
We observe that if $\sigma$ is a free face of $\tau$, then necessarily $\dim\tau=\dim\sigma+1$ and $\tau$ is a maximal face of $K$.
\end{remark}

\begin{defn}[{cf.\ \cite[p.~99]{forman1998morse}}]
Let $K$ be a simplicial complex and $\sigma^{(n-1)}<\tau^{(n)}$ be two simplices of $K$ such that $\sigma$ is a free face of $\tau$.

Let $L=K\setminus \{\sigma, \tau\}$ be the simplicial complex resulting from deleting $\sigma$ and $\tau$ from $K$. We say that $K$ collapses onto $L$ by an \emph{elementary collapse} of dimension $n$. More generally, we say $K$ collapses onto $L$, denoted by $K\searrow L$, if $K$ can be transformed into $L$ by a finite sequence of elementary collapses.
\end{defn}

\begin{remark}
\label{remark:conven}
For convenience, we may write $(K,w)\searrow (L,w)$ to denote a collapse of the WSC $(K,w)$ onto the WSC $(L,w|_L)$. Similarly, if $K\searrow L$ is a collapse, we may write $(K,w)$ and $(L,w)$ to mean that the weight function on $L$ is the restriction of the weight function on $K$.
\end{remark}

\begin{defn}
Let $(K,w)\searrow (L,w)$ be a collapse of WSCs. We say that $K\searrow L$ \emph{preserves weighted homology} if
\begin{equation*}
H_i(K,w)\cong H_i(L,w)
\end{equation*}
for all $i\geq 0$.
\end{defn}

We show some examples of collapses and their effects on weighted homology.

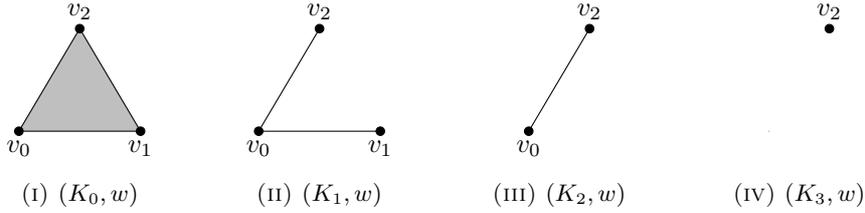
\begin{figure}[htbp]
\begin{subfigure}{0.24\textwidth}
\centering
\begin{tikzpicture}[scale=0.8]
\fill[fill=lightgray]
(0,0)  
-- (2,0)
-- (1,1.7);
\filldraw 
(0,0) circle (2pt) node[align=left,below] {$v_0$}
(2,0) circle (2pt) node[align=left,below] {$v_1$}  
(1,1.7) circle (2pt) node[align=left,above] {$v_2$};
\draw (0,0)--(2,0);
\draw (2,0)--(1,1.7);
\draw (1,1.7)--(0,0);
\end{tikzpicture}
\caption{$(K_0,w)$}
\label{subfig:2simplex}
\end{subfigure}
\begin{subfigure}{0.24\textwidth}
\centering
\begin{tikzpicture}[scale=0.8]
\filldraw 
(0,0) circle (2pt) node[align=left,below] {$v_0$}
(2,0) circle (2pt) node[align=left,below] {$v_1$}  
(1,1.7) circle (2pt) node[align=left,above] {$v_2$};
\draw (0,0)--(2,0);
\draw (1,1.7)--(0,0);
\end{tikzpicture}
\caption{$(K_1,w)$}
\end{subfigure}
\begin{subfigure}{0.24\textwidth}
\centering
\begin{tikzpicture}[scale=0.8]
\filldraw 
(0,0) circle (2pt) node[align=left,below] {$v_0$}
(1,1.7) circle (2pt) node[align=left,above] {$v_2$};
\draw (1,1.7)--(0,0);
\end{tikzpicture}
\caption{$(K_2,w)$}
\end{subfigure}
\begin{subfigure}{0.24\textwidth}
\centering
\begin{tikzpicture}[scale=0.8]
\filldraw 
(0,0) circle (0pt) node[align=left,below] {\phantom{$v_0$}}
(1,1.7) circle (2pt) node[align=left,above] {$v_2$};
\end{tikzpicture}
\caption{$(K_3,w)$}
\label{subfig:0simplex}
\end{subfigure}
\caption{The four WSCs $(K_i,w)$ for $0\leq i\leq 3$, with weights as described in Example \ref{eg:nopreserve}. The WSCs are chosen such that ${K_i\searrow K_{i+1}}$ is an elementary collapse for $0\leq i\leq 2$.}
\label{fig:collapses}
\end{figure}

\begin{eg}
\label{eg:nopreserve}
Consider the WSCs in Figure \ref{fig:collapses}. Let $w:K_0\to\Z$ be defined by
\begin{align*}
w([v_0])=w([v_1])=1,\quad w([v_2])&=2,\\
w([v_0,v_1])=w([v_0,v_2])=2,\quad w([v_1,v_2])&=4,\\
w([v_0,v_1,v_2])&=4.
\end{align*}

For convenience, we also use $w$ to denote the weight functions $w|_{K_i}:K_i\to\Z$ for $1\leq i\leq 3$ (see Remark \ref{remark:conven}). We calculate the integral weighted homology of the WSCs $(K_i,w)$ and summarize them in Table \ref{table:weightedhom}.

\begin{table}[htbp]
\centering
\caption{The table lists the integral weighted homology of the WSCs $(K_i,w)$. For instance, $H_0(K_0,w)=\Z\oplus\Z/2$, $H_1(K_0,w)=H_2(K_0,w)=0$.}
  \begin{tabular}{| c | c | c | c | c | }
    \hline
     & $(K_0,w)$ & $(K_1,w)$ & $(K_2,w)$ & $(K_3,w)$ \\ \hline
     $H_0$ & $\Z\oplus\Z/2$ & $\Z\oplus\Z/2$ & $\Z$ & $\Z$ \\ \hline
     $H_1$ & $0$ & $0$ & $0$ & $0$ \\ \hline
     $H_2$ & $0$ & $0$ & $0$ & $0$ \\ \hline
  \end{tabular}
\label{table:weightedhom}
\end{table}

We see that certain elementary collapses, such as $K_0\searrow K_1$ and $K_2\searrow K_3$, preserve the weighted homology while $K_1\searrow K_2$ does not.
\end{eg}

A natural question would be to ask for which cases do elementary collapses preserve weighted homology. We work out some sufficient conditions for elementary collapses to preserve weighted homology.

\begin{theorem}
\label{thm:sameweight}
Let $(K,w)\searrow (L,w)$ be an elementary collapse of dimension $n\geq 1$, where $L=K\setminus\{\sigma^{(n-1)},\tau^{(n)}\}$.

Suppose that $w(\sigma)=w(\tau)=a\in R\setminus\{0\}$. That is, $\sigma$ and $\tau$ have the same nonzero weight.

Then, $K\searrow L$ preserves weighted homology.
\end{theorem}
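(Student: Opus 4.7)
The plan is to produce a chain-level direct-sum decomposition $C_\bullet(K,w) \cong C_\bullet(L,w) \oplus D_\bullet$ in which $D_\bullet$ is a two-term acyclic complex concentrated in degrees $n-1$ and $n$, from which the theorem follows on passing to homology. The equal-weight hypothesis enters exactly once, as the input that makes a certain change of basis legal. To set up, first note that since $\sigma$ is a free face of $\tau$, the simplex $\tau$ is maximal in $K$ and $\sigma = d_j(\tau)$ for a unique $j \in \{0,\dots,n\}$. Because $w(\sigma) = w(\tau) = a \neq 0$, both simplices contribute basis elements, so
\begin{align*}
C_n(K,w) &= C_n(L,w) \oplus R\tau, \\
C_{n-1}(K,w) &= C_{n-1}(L,w) \oplus R\sigma,
\end{align*}
and $C_i(K,w) = C_i(L,w)$ in other degrees. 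A direct computation from Definition \ref{def:wboundary}, using that $\sigma$ is a face of no $n$-simplex other than $\tau$ and that $\tau$ is a face of no $(n+1)$-simplex at all, yields
\begin{equation*}
\partial^w_n(\tau) = (-1)^j \sigma + c, \qquad c \in C_{n-1}(L,w),
\end{equation*}
where the coefficient $(-1)^j$ arises from the cancellation $w(\tau)/w(\sigma) = a/a = 1$. Moreover $\partial^w_n$ sends $C_n(L,w)$ into $C_{n-1}(L,w)$, and $\partial^w_{n+1}$ takes values in $C_n(L,w)$.

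Next I would replace $\sigma$ by $\widetilde{\sigma} := \partial^w_n(\tau)$ in the basis of $C_{n-1}(K,w)$. Because $(-1)^j$ is a unit in $R$, this is a genuine change of basis, producing the new direct summand $R\widetilde{\sigma}$ in place of $R\sigma$. In the new basis one has $\partial^w_n(\tau) = \widetilde{\sigma}$ and $\partial^w_{n-1}(\widetilde{\sigma}) = \partial^w_{n-1}\partial^w_n(\tau) = 0$; combined with the containment statements from the previous paragraph, this shows that the submodules $C_\bullet(L,w)$ and $D_\bullet$, where $D_n = R\tau$, $D_{n-1} = R\widetilde{\sigma}$, and $D_\bullet = 0$ elsewhere, are complementary subcomplexes of $C_\bullet(K,w)$. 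The differential on the $C_\bullet(L,w)$ factor agrees with the weighted boundary of $L$ because the face-sum formula in Definition \ref{def:wboundary} is manifestly invariant under restricting to $L$. Since $D_\bullet$ is acyclic, taking homology gives $H_i(K,w) \cong H_i(L,w)$ for all $i$.

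The only real obstacle is the change-of-basis step, and this is precisely where the hypothesis $w(\sigma) = w(\tau)$ is indispensable: without it, the coefficient of $\sigma$ in $\partial^w_n(\tau)$ would be $\pm w(\tau)/w(\sigma)$, which need not be a unit in $R$, so $\partial^w_n(\tau)$ would not be admissible as a replacement basis vector for $\sigma$ and the splitting argument breaks down. This is consistent with the failure of the collapse $K_1 \searrow K_2$ in Example \ref{eg:nopreserve}, where the two collapsed simplices carry different weights.
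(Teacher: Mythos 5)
Your proof is correct, and it takes a genuinely different route from the paper's. The paper forms the quotient complex $D_k = C_k(K,w)/C_k(L,w)$, shows via the computation $\partial_n^D([\tau])=\pm[\sigma]$ that $H_*(D_*)=0$, and then reads off the isomorphism from the long exact sequence of the short exact sequence $0\to C_*(L,w)\to C_*(K,w)\to D_*\to 0$. You instead split that sequence at the chain level: the unimodular change of basis $\sigma\mapsto\widetilde{\sigma}=\partial_n^w(\tau)$ exhibits $C_\bullet(K,w)$ as an internal direct sum of the subcomplex $C_\bullet(L,w)$ and an acyclic two-term subcomplex $R\tau\to R\widetilde{\sigma}$. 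The two arguments hinge on the same single use of the hypothesis --- the coefficient of $\sigma$ in $\partial_n^w(\tau)$ is $\pm w(\tau)/w(\sigma)=\pm 1$, a unit --- and both correctly use the freeness of $\sigma$ and maximality of $\tau$ to confine the rest of the differential to $L$. What your version buys is a slightly stronger conclusion with less machinery: you get a chain-level deformation (in the spirit of the algebraic Morse theory of J\"ollenbeck--Welker and Sk\"oldberg cited in the paper's related-work section) rather than just an isomorphism on homology, and you avoid invoking the Zig-zag lemma; the paper's version buys uniformity, since the same quotient-complex setup is reused essentially verbatim for elementary removals in Theorem \ref{thm:removalhom}, where no splitting exists. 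One small point worth making explicit in your write-up: the coefficients of $c$ involve $w(\tau)/w(d_i(\tau))$ for the other faces $d_i(\tau)$, and these are defined because $w(d_i(\tau))\mid w(\tau)=a\neq 0$ forces every face of $\tau$ to have nonzero weight; this is implicit but harmless.
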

\begin{proof}
By Proposition 4.7 in \cite[p.~2676]{ren2018weighted}, the inclusion map $i: L\to K$ induces a chain map $i_\sharp: C_k(L,w)\to C_k(K,w)$. We define the quotient module
\begin{equation*}
D_k:=C_k(K,w)/C_k(L,w).
\end{equation*}

We have the following short exact sequence of chain complexes:
\begin{equation*}
0\to C_*(L,w)\to C_*(K,w)\to D_*\to 0.
\end{equation*}
The boundary operator of $C_*(K,w)$ is the weighted boundary map $\partial^w$. The boundary operators of $C_*(L,w)$ and $D_*$, denoted by $\partial^L$ and $\partial^D$, are canonically induced from $\partial^w$ by the restriction and quotient respectively. To be precise, $\partial^L(\sigma)=\partial^w(i(\sigma))$ for $\sigma\in C_*(L,w)$ and $\partial^D([\sigma])=[\partial^w(\sigma)]$ for the equivalence class $[\sigma]\in D_*$.

By the Zig-zag lemma \cite[p.~136]{Munkres1984}, there is a long exact sequence
\begin{equation}
\label{eq:longexact}
\dots\to H_{k+1}(D_*)\to H_k(L,w)\to H_k(K,w)\to H_k(D_*)\to H_{k-1}(L,w)\to \dots
\end{equation}

We observe that 
\begin{equation*}
D_k\cong\begin{cases}
R &\text{if $k=n-1$ or $k=n$,}\\
0 &\text{otherwise.}
\end{cases}
\end{equation*}

Hence, it is clear that $\ker\left(\partial_k^D: D_k\to D_{k-1}\right)=0$ for $k\notin\{n-1,n\}$. Consequently, we have $H_k(D_*)=0$ for $k\notin\{n-1,n\}$. 

We note that $\ker\left(\partial_{n-1}^D: D_{n-1}\to D_{n-2}\right)\cong\langle [\sigma]\rangle$. Since
\begin{equation}
\label{eq:boundarytau}
\begin{split}
\partial_n^D([\tau])&=[\partial_n^w(\tau)]\\
&=[\pm\frac{w(\tau)}{w(\sigma)}\sigma]\\
&=\pm[\sigma],
\end{split}
\end{equation}
we also have $\Ima(\partial_n^D)\cong\langle [\sigma]\rangle$. Thus, $H_{n-1}(D_*)\cong 0$.

The calculations in \eqref{eq:boundarytau} also implies $\ker(\partial_n^D)=0$, hence $H_n(D_*)=0$. Essentially, we have shown that $H_k(D_*)\cong 0$ for all $k$.

Hence, the long exact sequence in \eqref{eq:longexact} is of the form
\begin{equation*}
\dots\to 0\to H_k(L,w)\to H_k(K,w)\to 0\to\dots
\end{equation*}
which implies that $H_k(L,w)\cong H_k(K,w)$ for all $k$.
\end{proof}

\begin{remark}
Theorem \ref{thm:sameweight} explains why $K_0\searrow K_1$ preserves the weighted homology in Example \ref{eg:nopreserve}.

The converse of Theorem \ref{thm:sameweight} is not true: If an elementary collapse $K\searrow L$ preserves weighted homology, it does not imply that the two removed simplices $\sigma, \tau$ have the same nonzero weight. A counterexample is $K_2\searrow K_3$ in Example \ref{eg:nopreserve}.
\end{remark}

A corollary to Theorem \ref{thm:sameweight} is an alternative proof that a collapse $K\searrow L$ always preserves the usual simplicial homology, without using the fact that $K$ and $L$ are homotopy equivalent.

\begin{cor}
Let $K\searrow L$ be a collapse (not necessarily elementary). Then, $H_*(K)\cong H_*(L)$.
\end{cor}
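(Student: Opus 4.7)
The plan is to reduce to the elementary case and then leverage Theorem \ref{thm:sameweight} applied to a constant weighting. First, I would equip $K$ with the constant weighting $w \equiv a$ for some fixed $a \in R \setminus \{0\}$ (for concreteness, take $R = \Z$ and $a = 1$). Observe that the restriction of this constant weighting to any subcomplex obtained via collapse is again the constant weighting with value $a$, so the constancy is preserved along the entire sequence of elementary collapses.

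Next, I would handle an elementary collapse first. If $K' \searrow L'$ is a single elementary collapse of some dimension $n \geq 1$ removing a free-face pair $(\sigma, \tau)$, then under the constant weighting we automatically have $w(\sigma) = w(\tau) = a \neq 0$. Hence the hypothesis of Theorem \ref{thm:sameweight} is fulfilled and we conclude $H_*(K', w) \cong H_*(L', w)$.

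For the general case, write $K = K_0 \searrow K_1 \searrow \cdots \searrow K_m = L$ as a finite sequence of elementary collapses. A straightforward induction on $m$, using the previous paragraph at each step, yields $H_*(K, w) \cong H_*(L, w)$. Finally, Proposition \ref{prop:constantweighting} identifies the weighted homology under a constant nonzero weighting with the ordinary simplicial homology, so $H_*(K) \cong H_*(L)$ as required.

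There is no substantive obstacle here; the only thing to be careful about is ensuring that the constant weighting restricts compatibly under each elementary collapse, which is immediate since restriction of a constant function is constant. The corollary is therefore a direct specialization of Theorem \ref{thm:sameweight} combined with Proposition \ref{prop:constantweighting}, and it bypasses the classical homotopy-equivalence argument entirely.
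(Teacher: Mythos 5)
Your proposal is correct and follows essentially the same route as the paper: reduce to a single elementary collapse, apply Theorem \ref{thm:sameweight} under the constant nonzero weighting, and translate back to ordinary homology via Proposition \ref{prop:constantweighting}, with the general case handled by induction on the length of the collapse sequence. No gaps.
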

\begin{proof}
It suffices to prove the statement for an elementary collapse $K\searrow L$, and extend to general collapses by induction.

By Proposition \ref{prop:constantweighting}, the usual homology $H_*(K)$ is isomorphic to the weighted homology $H_*(K,w)$ when $w$ is the constant weighting $w(\sigma)\equiv a\in R\setminus\{0\}$. Similarly, $H_*(L)\cong H_*(L,w)$.

By Theorem \ref{thm:sameweight}, $H_*(K,w)\cong H_*(L,w)$ since all weights of simplices are the same nonzero element $a$. Therefore,
\begin{equation*}
H_*(K)\cong H_*(K,w)\cong H_*(L,w)\cong H_*(L).
\end{equation*}
\end{proof}

We also note that Theorem \ref{thm:sameweight} can be slightly strengthened: the two removed simplices $\sigma^{(n-1)}, \tau^{(n)}$ need only to have weights that are associates in $R$, not necessarily equal. We state this more precisely in the theorem below.

\begin{theorem}
\label{thm:strongerassoc}
Let $(K,w)\searrow (L,w)$ be an elementary collapse of dimension $n\geq 1$, where $L=K\setminus\{\sigma^{(n-1)},\tau^{(n)}\}$.

Suppose that $w(\sigma)\in R\setminus\{0\}$ and $w(\tau)=uw(\sigma)$ for some unit $u\in R$. That is, $w(\sigma)$ and $w(\tau)$ are nonzero associates.

Then, $K\searrow L$ preserves weighted homology.
\end{theorem}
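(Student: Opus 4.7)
The plan is to mimic the proof of Theorem \ref{thm:sameweight} essentially verbatim; the only modification needed is in the computation of $\partial_n^D([\tau])$, where the coefficient is no longer $\pm 1$ but rather $\pm u$. Since $u$ is a unit in $R$, multiplication by $\pm u$ on the free rank-one module $R \cong D_{n-1}$ is still a bijection, so the key conclusions $\ker(\partial_n^D) = 0$ and $\Ima(\partial_n^D) \cong \langle [\sigma]\rangle$ continue to hold.

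In more detail, I would set $L = K \setminus \{\sigma, \tau\}$ and form the same quotient $D_k := C_k(K,w)/C_k(L,w)$, obtaining the short exact sequence
\begin{equation*}
0 \to C_*(L,w) \to C_*(K,w) \to D_* \to 0
\end{equation*}
and hence the same long exact sequence in weighted homology. As before, $D_k \cong R$ for $k \in \{n-1, n\}$ and $D_k = 0$ otherwise, so automatically $H_k(D_*) = 0$ whenever $k \notin \{n-1,n\}$. The core calculation becomes
\begin{equation*}
\partial_n^D([\tau]) = [\partial_n^w(\tau)] = \left[\pm \frac{w(\tau)}{w(\sigma)}\sigma\right] = \pm u [\sigma],
\end{equation*}
where the only surviving term in $\partial_n^w(\tau)$ modulo $C_{n-1}(L,w)$ is the one involving $\sigma$, since $\tau$ is a free face of no simplex (being maximal) and the other faces of $\tau$ lie in $L$.

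Now I would invoke the fact that $u$ is a unit in the integral domain $R$: the map $R \to R$ given by multiplication by $\pm u$ is an $R$-module isomorphism, with inverse multiplication by $\pm u^{-1}$. Consequently $\Ima(\partial_n^D) = \langle \pm u [\sigma]\rangle = \langle [\sigma]\rangle$, which gives $H_{n-1}(D_*) = 0$, and $\ker(\partial_n^D) = 0$, which gives $H_n(D_*) = 0$. Therefore $H_k(D_*) = 0$ for all $k$, and the long exact sequence collapses to the isomorphisms $H_k(L,w) \cong H_k(K,w)$ for every $k$.

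There is really no substantial obstacle here; the only point requiring care is the observation that the divisibility condition $w(\sigma) \mid w(\tau)$ in Definition \ref{wscdef} guarantees $w(\tau)/w(\sigma)$ is a well-defined element of $R$, so writing this quotient as the unit $u$ is legitimate and the coefficient $\pm u$ really does act invertibly on $D_{n-1} \cong R$.
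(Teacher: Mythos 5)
Your proposal is correct and matches the paper's own argument exactly: the paper likewise reduces to the proof of Theorem \ref{thm:sameweight}, replacing Equation \eqref{eq:boundarytau} by $\partial_n^D([\tau])=\pm u[\sigma]$ and noting that this does not affect $\Ima(\partial_n^D)\cong\langle[\sigma]\rangle$ (nor the vanishing of $\ker(\partial_n^D)$) because $u$ is a unit.
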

\begin{proof}
The proof is essentially the same as that of Theorem \ref{thm:sameweight}. There is a minor difference in that Equation \eqref{eq:boundarytau} is replaced by $\partial_n^D([\tau])=\pm u[\sigma]$. This does not affect the subsequent result that $\Ima(\partial_n^D)\cong\langle [\sigma]\rangle$.
\end{proof}

\begin{remark}
Although Theorem \ref{thm:strongerassoc} is stronger than Theorem \ref{thm:sameweight}, it is considerably easier to check the condition $w(\sigma)=w(\tau)$ in Theorem \ref{thm:sameweight}. For instance, in the ring $R=\Z[\sqrt{2}]$, it may not be immediately obvious that $2+\sqrt{2}$ and $4+3\sqrt{2}$ are associates. Hence, our subsequent theory of weighted discrete Morse theory will be based on Theorem \ref{thm:sameweight} instead of Theorem \ref{thm:strongerassoc}.
\end{remark}

\subsection{Elementary Removals and Weighted Homology}
We now study an operation similar to elementary collapses. Let $K$ be a simplicial complex. If we remove a maximal face $\sigma$ from $K$, we note that the result $L=K\setminus\{\sigma\}$ is still a simplicial complex. We will call this operation of removing a maximal simplex an \emph{elementary removal}, similar to the notation in \cite[p.~2]{zorn2009discrete}. Unlike elementary collapses, it is not guaranteed that $K$ and $L=K\setminus\{\sigma\}$ are homotopy equivalent. Hence, elementary removals may not preserve usual homology, let alone weighted homology. In this subsection, we study the effects of elementary removals on weighted homology. Subsequently, the results will be useful when we study weighted discrete Morse theory in Section \ref{sec:wdmorse}.

\begin{theorem}
\label{thm:removalhom}
Let $(K,w)$ be a WSC. Let $\sigma^{(n)}$ be a maximal face of $K$ with $w(\sigma)\neq 0$, and let $L=K\setminus\{\sigma\}$.

Then, we have the following results:
\begin{enumerate}
\item \begin{equation*}
H_k(L,w)\cong H_k(K,w)
\end{equation*}
for $k\notin\{n-1,n\}$.

\item We also have
\begin{equation*}
H_{n-1}(K,w)\cong H_{n-1}(L,w)/\{r[\partial^w\sigma]\mid r\in R\},
\end{equation*}
where $\partial^w$ denotes the weighted boundary operator of $C_*(K,w)$, and $[\partial^w\sigma]$ denotes the homology class in $H_{n-1}(L,w)$.

\item Let $i_*:H_n(L,w)\to H_n(K,w)$ be the map canonically induced by the inclusion $i: C_*(L,w)\to C_*(K,w)$. Then, we have 
\begin{equation*}
H_n(K,w)/\Ima(i_*:H_n(L,w)\to H_n(K,w))\cong\{r\in R\mid r[\partial^w\sigma]=0\}.
\end{equation*}
\end{enumerate}
\end{theorem}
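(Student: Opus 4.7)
The plan is to adapt the short exact sequence machinery used in the proof of Theorem \ref{thm:sameweight} to the present setting, where only a single simplex (rather than a collapsing pair) is removed. All three parts should fall out from one long exact sequence once the connecting homomorphism is identified.

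First I form the short exact sequence of chain complexes
\[
0 \to C_*(L,w) \to C_*(K,w) \to D_* \to 0,
\]
where $D_* := C_*(K,w)/C_*(L,w)$. Because $L = K \setminus \{\sigma\}$ and $w(\sigma) \neq 0$, the only generator missing from $C_*(L,w)$ is $\sigma$, so $D_n \cong R$ and $D_k = 0$ for $k \neq n$. The induced boundary $\partial^D_n$ is automatically zero (its target is trivial), hence $H_n(D_*) \cong R$, generated by the class of $\sigma$, while $H_k(D_*) = 0$ for all other $k$.

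Applying the Zig-zag lemma gives a long exact sequence in which the two terms $H_{k+1}(D_*)$ and $H_k(D_*)$ flanking $H_k(L,w) \to H_k(K,w)$ both vanish whenever $k \notin \{n-1, n\}$. This immediately yields part (1). For parts (2) and (3), I extract the five-term exact sequence
\[
0 \to H_n(L,w) \xrightarrow{i_*} H_n(K,w) \xrightarrow{q_*} H_n(D_*) \xrightarrow{\delta} H_{n-1}(L,w) \xrightarrow{j_*} H_{n-1}(K,w) \to 0.
\]

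The main step, and the place where care is required, is identifying the connecting homomorphism $\delta$ explicitly. A routine diagram chase via the snake lemma shows $\delta([\sigma]) = [\partial^w \sigma]$, regarded as a cycle in $C_{n-1}(L,w)$ (well-defined there since $\partial^w\sigma$ involves only proper faces of $\sigma$, all of which lie in $L$, and since $\partial^w\circ \partial^w = 0$). After identifying $H_n(D_*)$ with $R$ via the generator $[\sigma]$, this gives $\Ima(\delta) = \{r[\partial^w\sigma] \mid r \in R\}$ and $\ker(\delta) = \{r \in R \mid r[\partial^w\sigma] = 0 \text{ in } H_{n-1}(L,w)\}$. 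Part (2) then follows from exactness at $H_{n-1}(L,w)$ combined with the surjectivity of $j_*$, presenting $H_{n-1}(K,w)$ as the quotient $H_{n-1}(L,w)/\Ima(\delta)$. Part (3) follows from exactness at $H_n(K,w)$ and $H_n(D_*)$: the First Isomorphism Theorem applied to $q_*$ gives $H_n(K,w)/\Ima(i_*) = H_n(K,w)/\ker(q_*) \cong \Ima(q_*) = \ker(\delta)$, which is precisely the annihilator described in the statement.
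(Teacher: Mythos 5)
Your proof is correct and follows essentially the same route as the paper's: the same short exact sequence $0\to C_*(L,w)\to C_*(K,w)\to D_*\to 0$ with $D_*$ concentrated in degree $n$, the same long exact sequence from the Zig-zag lemma, the same identification of the connecting homomorphism as $r\mapsto r[\partial^w\sigma]$, and the same applications of exactness and the First Isomorphism Theorem for parts (2) and (3). Nothing further is needed.
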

\begin{proof}
Similar to the proof in Theorem \ref{thm:sameweight}, we define $D_k:=C_k(K,w)/C_k(L,w)$. Consider the short exact sequence of chain complexes:
\begin{equation*}
0\to C_*(L,w)\xrightarrow{i}C_*(K,w)\xrightarrow{\pi}D_*\to 0,
\end{equation*}
where the chain maps $i$, $\pi$ are the canonical inclusion and projection respectively.

We note that
\begin{equation*}
D_k\cong\begin{cases}
R &\text{if $k=n$,}\\
0 &\text{otherwise.}
\end{cases}
\end{equation*}

By the Zig-zag Lemma, there is a long exact sequence
\begin{equation}
\label{eq:zigzag2}
\dots\to H_{k+1}(D_*)\xrightarrow{\partial^w_*} H_k(L,w)\xrightarrow{i_*}H_k(K,w)\xrightarrow{\pi_*}H_k(D_*)\xrightarrow{\partial^w_*}H_{k-1}(L,w)\to \dots
\end{equation}
where $\partial^w_*$ is induced by the weighted boundary operator in $C_*(K,w)$.

For $k\neq n$, it is clear that $H_k(D_*)=0$. If $k\notin\{n-1,n\}$, then $H_{k+1}(D_*)=0$ and $H_k(D_*)=0$. Hence, we can conclude from the long exact sequence \eqref{eq:zigzag2} that $H_k(L,w)\cong H_k(K,w)$ for $k\notin\{n-1,n\}$.

When $k=n$, we have $H_n(D_*)\cong R$ as $R$-modules. Now, consider the exact sequence
\begin{equation*}
0\to H_n(L,w)\xrightarrow{i_*}H_n(K,w)\xrightarrow{\pi_*}R\xrightarrow{\partial^w_*}H_{n-1}(L,w)\xrightarrow{i_*}H_{n-1}(K,w)\to 0.
\end{equation*}

We note that the map $R\xrightarrow{\partial^w_*}H_{n-1}(L,w)$ is defined by $\partial^w_*([\sigma])=[\partial^w\sigma]$ and extended $R$-linearly. By the first isomorphism theorem and exactness, we have
\begin{equation*}
\begin{split}
H_{n-1}(K,w)&\cong H_{n-1}(L,w)/\ker(i_*:H_{n-1}(L,w)\to H_{n-1}(K,w))\\
&=H_{n-1}(L,w)/\{r[\partial^w\sigma]\mid r\in R\}.
\end{split}
\end{equation*}

Similarly, we have
\begin{equation*}
\begin{split}
&H_n(K,w)/\Ima(i_*:H_n(L,w)\to H_n(K,w))\\
&=H_n(K,w)/\ker(\pi_*:H_n(K,w)\to R)\\
&\cong\Ima(\pi_*:H_n(K,w)\to R)\\
&=\ker(\partial^w_*:R\to H_{n-1}(L,w))\\
&\cong\{r\in R\mid r[\partial^w\sigma]=0\}.
\end{split}
\end{equation*}
\end{proof}

In the case where $R$ is a PID, for instance $R=\mathbb{Z}$, the result (3) in Theorem \ref{thm:removalhom} takes on a neater form (3').

\begin{cor}
\label{cor:removalhom}
Let $R$ be a PID. Let $(K,w)$ be a WSC, where $w:K\to R$ is the weight function. Let $\sigma^{(n)}$ be a maximal face of $K$ with $w(\sigma)\neq 0$, and let $L=K\setminus\{\sigma\}$.

We have:
\begin{itemize}
\item [(3')] If $[\partial^w\sigma]\in H_{n-1}(L,w)$ is a torsion element, i.e.\ there exists $r\in R\setminus\{0\}$ such that $r[\partial^w\sigma]=0$, then
\begin{equation*}
H_n(K,w)\cong H_n(L,w)\oplus R.
\end{equation*}

If $[\partial^w\sigma]$ is not a torsion element, then $H_n(K,w)\cong H_n(L,w)$.
\end{itemize}
\end{cor}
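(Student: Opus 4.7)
The plan is to apply part (3) of Theorem \ref{thm:removalhom} together with the exact sequence \eqref{eq:zigzag2} produced in its proof. Specifically, around degree $n$ I would extract the fragment
\begin{equation*}
0 \to H_n(L,w) \xrightarrow{i_*} H_n(K,w) \xrightarrow{\pi_*} R \xrightarrow{\partial^w_*} H_{n-1}(L,w),
\end{equation*}
in which $\partial^w_*(r) = r[\partial^w\sigma]$. By exactness, $i_*$ is injective and $\Ima(\pi_*) = \ker(\partial^w_*) = I$, where $I := \{r \in R \mid r[\partial^w\sigma] = 0\}$ is the annihilator ideal of $[\partial^w\sigma]$.

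Next I would split into two cases. If $[\partial^w\sigma]$ is not torsion, then $I = 0$, so $\pi_* = 0$, which forces the injective map $i_*$ to be surjective as well, and hence an isomorphism; this immediately yields $H_n(K,w) \cong H_n(L,w)$. If $[\partial^w\sigma]$ is torsion, then $I$ is a nonzero ideal of the PID $R$, hence principal: $I = (r_0)$ with $r_0 \neq 0$. Because $R$ is an integral domain, multiplication by $r_0$ is an $R$-module isomorphism $R \xrightarrow{\sim} (r_0) = I$, so the fragment above trims to a short exact sequence
\begin{equation*}
0 \to H_n(L,w) \xrightarrow{i_*} H_n(K,w) \xrightarrow{\pi_*} R \to 0.
\end{equation*}

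The last step is to observe that the right-hand term $R$ is free, hence projective, so the short exact sequence splits and $H_n(K,w) \cong H_n(L,w) \oplus R$, as required. The only step that genuinely uses the PID hypothesis — and therefore the place I would watch most carefully — is the identification of the abstract annihilator ideal $I$ with $R$ as an $R$-module: this needs both that $I$ be principal (which is where PID is used) and that its generator be a non-zero-divisor (automatic in an integral domain). Beyond that, the argument is a routine case analysis on the exact sequence of Theorem \ref{thm:removalhom} followed by an appeal to the splitting lemma for short exact sequences with a free quotient.
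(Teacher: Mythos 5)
Your proof is correct and follows essentially the same route as the paper: both extract the short exact sequence $0 \to H_n(L,w) \to H_n(K,w) \to I \to 0$ from Theorem \ref{thm:removalhom}(3), identify the annihilator ideal $I$ as $0$ or as a principal ideal isomorphic to $R$ using the PID and integral-domain hypotheses, and split the sequence because the quotient is free (the paper phrases this as $\Ext_R^1(I,J)=0$ for projective $I$, which is the same splitting fact you invoke). No gaps.
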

\begin{proof}
We note that $I:=\{r\in R\mid r[\partial^w\sigma]=0\}$ is an ideal of $R$. Since $R$ is a PID, hence $I=(a)$ for some $a\in R$. By the first isomorphism theorem and exactness, we have 
\begin{equation*}
\begin{split}
J&:=\Ima(i_*:H_n(L,w)\to H_n(K,w))\\
&\cong H_n(L,w)/\ker(i_*:H_n(L,w)\to H_n(K,w))\\
&\cong H_n(L,w).
\end{split}
\end{equation*}

We note that $I$ is a free $R$-module, with basis $\{a\}$ if $a\neq 0$, and with basis $\emptyset$ if $a=0$. In particular, $I$ is a projective $R$-module and hence $\Ext_R^1(I,J)=0$. By Theorem \ref{thm:removalhom} (3), we have the short exact sequence
\begin{equation*}
0\to J\to H_n(K,w)\to I\to 0.
\end{equation*}

Since $\Ext_R^1(I,J)=0$, we conclude that
\begin{equation*}
\begin{split}
H_n(K,w)&\cong J\oplus I\\
&\cong H_n(L,w)\oplus I.
\end{split}
\end{equation*}

If $[\partial^w\sigma]$ is a torsion element, then $a\neq 0$. We have $I\cong R$ as $R$-modules, where the isomorphism is given by $a\mapsto 1$. Hence, $H_n(K,w)\cong H_n(L,w)\oplus R$.

If $[\partial^w\sigma]$ is not a torsion element, then $I=0$ and therefore $H_n(K,w)\cong H_n(L,w)$.
\end{proof}

We show an example of the effect of an elementary removal on the weighted homology.

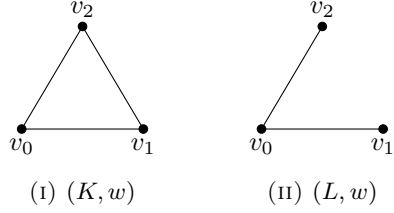
\begin{figure}[htbp]
\begin{subfigure}{0.24\textwidth}
\centering
\begin{tikzpicture}[scale=0.8]
\filldraw 
(0,0) circle (2pt) node[align=left,below] {$v_0$}
(2,0) circle (2pt) node[align=left,below] {$v_1$}  
(1,1.7) circle (2pt) node[align=left,above] {$v_2$};
\draw (0,0)--(2,0);
\draw (2,0)--(1,1.7);
\draw (1,1.7)--(0,0);
\end{tikzpicture}
\caption{$(K,w)$}
\label{subfig:2simplex}
\end{subfigure}
\begin{subfigure}{0.24\textwidth}
\centering
\begin{tikzpicture}[scale=0.8]
\filldraw 
(0,0) circle (2pt) node[align=left,below] {$v_0$}
(2,0) circle (2pt) node[align=left,below] {$v_1$}  
(1,1.7) circle (2pt) node[align=left,above] {$v_2$};
\draw (0,0)--(2,0);
\draw (1,1.7)--(0,0);
\end{tikzpicture}
\caption{$(L,w)$}
\end{subfigure}
\caption{The first simplicial complex $K$ consists of 3 edges and 3 vertices. The second simplicial complex $L$ is obtained by $L:=K\setminus\{\sigma\}$, where $\sigma^{(1)}=[v_1,v_2]$ is a maximal face of $K$.}
\label{fig:eleremoval}
\end{figure}

\begin{eg}
Consider the WSCs in Figure \ref{fig:eleremoval} with the following weight function $w:K\to\Z$:
\begin{align*}
w([v_0])=w([v_1])=w([v_2])=1,\\
w([v_0,v_1])=w([v_0,v_2])=w([v_1,v_2])=2.
\end{align*}

The weight function for $L$ is the restriction of $w$ to $L$, also denoted $w$ for convenience.

We first calculate the weighted homology groups for $(K,w)$. We have
\begin{align*}
\ker\partial_0^w&=\langle v_0,v_1,v_2\rangle\\
\Ima\partial_1^w&=\langle 2v_1-2v_0,2v_2-2v_0,2v_2-2v_1\rangle\\
\ker\partial_1^w&=\langle [v_0,v_1]-[v_0,v_2]+[v_1,v_2]\rangle,
\end{align*}

which implies that
\begin{align*}
H_0(K,w)&\cong\Z\oplus\Z/2\oplus\Z/2\\
H_1(K,w)&\cong\Z.
\end{align*}

Similarly, we can calculate that
\begin{align*}
H_0(L,w)&\cong\Z\oplus\Z/2\oplus\Z/2\\
H_1(L,w)&\cong 0.
\end{align*}

Now, we show that Theorem \ref{thm:removalhom} and Corollary \ref{cor:removalhom} agrees with our calculations. For the maximal face $\sigma^{(1)}=[v_1,v_2]$, we calculate that
\begin{equation*}
\begin{split}
\partial^w\sigma&=2v_2-2v_1\\
&=(2v_2-2v_0)-(2v_1-2v_0)\\
&\in\Ima(\partial_1^w: C_1(L,w)\to C_0(L,w)).
\end{split}
\end{equation*}

Hence, $[\partial^w\sigma]=0$ as a homology class in $H_0(L,w)$. Therefore, by Theorem \ref{thm:removalhom}~(2), we have
\begin{equation*}
H_0(K,w)\cong H_0(L,w)/0\cong H_0(L,w)
\end{equation*}
which agrees with our calculations.

By Corollary \ref{cor:removalhom}, since $[\partial^w\sigma]$ is a torsion element (it is annihilated by 1), hence
\begin{equation*}
H_1(K,w)\cong H_1(L,w)\oplus\Z.
\end{equation*}
Again, this agrees with our earlier calculations.
\end{eg}

\section{Weighted Discrete Morse Theory}
\label{sec:wdmorse}
In this section, we study and develop a weighted version of discrete Morse theory. In view of Proposition \ref{prop:constantweighting}, we may identify an unweighted simplicial complex $K$ with a WSC $(K,w)$ with \emph{nonzero constant weighting} $w(\sigma)\equiv a\in R\setminus\{0\}$ for all $\sigma\in K$. All theorems and definitions in this section reduce to the classical cases when the weight function $w: K\to R$ is the nonzero constant weighting.

\begin{defn}[cf.\ {\cite[p.~9]{forman2002user}}]
\label{defn:weightedmfunction}
Let $K$ be a simplicial complex. A function $f:K\to\R$ is a \emph{discrete Morse function} if for every $\alpha^{(n)}\in K$, the following two conditions are both satisfied:

\begin{enumerate}
\item $\left| \{\beta^{(n+1)}>\alpha\mid f(\beta)\leq f(\alpha)\} \right|\leq1$, and
\item $\left| \{\gamma^{(n-1)}<\alpha\mid f(\gamma)\geq f(\alpha)\} \right|\leq 1$.
\end{enumerate}
\end{defn}

Let $(K,w)$ be a WSC with a discrete Morse function $f$. We define critical and $w$-simple simplices.

\begin{defn}[cf.\ {\cite[p.~10]{forman2002user}}]
A simplex $\alpha^{(n)}$ is \emph{critical} if

\begin{enumerate}
\item $\left| \{\beta^{(n+1)}>\alpha\mid f(\beta)\leq f(\alpha)\} \right|=0$, and
\item $\left| \{\gamma^{(n-1)}<\alpha\mid f(\gamma)\geq f(\alpha)\} \right|=0$.
\end{enumerate}
\end{defn}

\begin{defn}[$w$-simple simplex]
A simplex $\alpha^{(n)}$ is said to be \emph{$w$-simple} if $w(\alpha)\neq 0$ and for all $\gamma^{(n-1)}<\alpha$, we have
\begin{equation*}
f(\gamma)\geq f(\alpha)\implies w(\gamma)=w(\alpha).
\end{equation*}
\end{defn}

\begin{remark}
For the nonzero constant weighting $w(\sigma)\equiv a\in R\setminus\{0\}$, all simplices are $w$-simple.
\end{remark}

We require the following basic property of simplicial complexes.

\begin{lemma}[cf.\ {\cite[p.~98]{forman1998morse}}]
\label{lemma:middlesimplex}
Let $n\geq 1$. Suppose $\beta^{(n+1)}>\alpha^{(n)}>\gamma^{(n-1)}$, then there exists a unique $n$-simplex $\alpha'\neq\alpha$ such that $\beta>\alpha'>\gamma$.
\qed
\end{lemma}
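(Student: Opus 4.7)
The plan is to reduce the statement to elementary combinatorics of finite sets, using the fact that simplices in an abstract simplicial complex are just finite subsets of the vertex set, and face containment corresponds to subset inclusion. The only subtlety to address is that any candidate $\alpha'$ we produce must actually belong to $K$; this will follow for free from the closure of $K$ under taking subsets, since $\alpha' \subset \beta \in K$.

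First, I would write $\beta$ concretely as a set of $n+2$ vertices. Since $\alpha \subset \beta$ and $\dim \alpha = \dim \beta - 1$, there is a unique vertex $v_i \in \beta$ with $\alpha = \beta \setminus \{v_i\}$. Likewise, since $\gamma \subset \alpha$ with $\dim \gamma = \dim \alpha - 1$, there is a unique $v_j \in \alpha$ with $\gamma = \alpha \setminus \{v_j\}$. Combining, $\gamma = \beta \setminus \{v_i, v_j\}$ where $v_i \neq v_j$.

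Next, I would characterize all $n$-simplices $\alpha''$ satisfying $\beta > \alpha'' > \gamma$. Any such $\alpha''$ is a subset of $\beta$ of cardinality $n+1$, obtained by deleting exactly one vertex of $\beta$, and must contain $\gamma$. The vertex deleted therefore cannot lie in $\gamma$, so it must be one of $v_i$ or $v_j$. This gives exactly two candidates, namely $\beta \setminus \{v_i\} = \alpha$ and $\alpha' := \beta \setminus \{v_j\}$. Both are faces of $\beta \in K$ and hence lie in $K$. Imposing $\alpha'' \neq \alpha$ forces $\alpha'' = \alpha'$, yielding both existence and uniqueness.

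The only point that requires a sentence of care is ensuring $\alpha' \neq \alpha$, which is immediate from $v_i \neq v_j$. There is no real obstacle here; the argument is a direct counting of the vertices missing from $\gamma$ as a subset of $\beta$, and the bookkeeping is all that needs to be made explicit.
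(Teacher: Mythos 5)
Your proof is correct: identifying the two vertices of $\beta$ missing from $\gamma$ and observing that any intermediate $n$-face of $\beta$ containing $\gamma$ must omit exactly one of them gives both existence and uniqueness, and membership of $\alpha'$ in $K$ follows from closure under faces. The paper states this lemma without proof (deferring to Forman), but your vertex-deletion counting is precisely the argument the paper uses for its generalization, Lemma~\ref{lemma:middlesimpgen}, so you have taken essentially the same approach.
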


Similar to the classical case, it follows directly from the definitions that a simplex cannot simultaneously fail both conditions in the test for criticality \cite[p.~10]{forman2002user}. 

\begin{lemma}[cf.\ {\cite[p.~10]{forman2002user}}]
\label{lemma:1critfail}
Let $K$ be a simplicial complex with a discrete Morse function $f$. Then, for any simplex $\alpha^{(n)}\in K$, either

\begin{enumerate}
\item $\left| \{\beta^{(n+1)}>\alpha\mid f(\beta)\leq f(\alpha)\} \right|=0$, or
\item $\left| \{\gamma^{(n-1)}<\alpha\mid f(\gamma)\geq f(\alpha)\} \right|=0$.
\end{enumerate}
\qed
\end{lemma}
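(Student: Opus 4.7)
The statement is Forman's classical observation that no simplex can witness a failure of both criticality conditions simultaneously, and since the discrete Morse function $f$ in Definition \ref{defn:weightedmfunction} makes no reference to the weight $w$, my plan is to reproduce his argument essentially unchanged. I would argue by contradiction: suppose some $\alpha^{(n)} \in K$ fails both (1) and (2), which produces simplices $\beta^{(n+1)} > \alpha$ with $f(\beta) \leq f(\alpha)$ and $\gamma^{(n-1)} < \alpha$ with $f(\gamma) \geq f(\alpha)$. In particular we get the chain of inequalities $f(\gamma) \geq f(\alpha) \geq f(\beta)$.

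The next step is to invoke Lemma \ref{lemma:middlesimplex} on $\beta > \alpha > \gamma$ to obtain a unique $n$-simplex $\alpha' \neq \alpha$ with $\beta > \alpha' > \gamma$. The crux of the argument is then to derive two incompatible estimates on $f(\alpha')$, and the key trick is to apply the Morse function conditions not to $\alpha'$ itself but to its two neighbours $\beta$ and $\gamma$, using the uniqueness forced by Definition \ref{defn:weightedmfunction}. Applying condition (2) at the $(n+1)$-simplex $\beta$, there is at most one $n$-face $\eta < \beta$ with $f(\eta) \geq f(\beta)$; since $\alpha$ already plays that role, $\alpha'$ cannot, so $f(\alpha') < f(\beta)$. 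Symmetrically, applying condition (1) at the $(n-1)$-simplex $\gamma$, there is at most one $n$-coface $\zeta > \gamma$ with $f(\zeta) \leq f(\gamma)$; since $\alpha$ already plays that role, $\alpha'$ cannot, giving $f(\alpha') > f(\gamma)$.

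Chaining these together produces $f(\alpha') > f(\gamma) \geq f(\alpha) \geq f(\beta) > f(\alpha')$, the desired contradiction. I do not anticipate a real obstacle here, since the argument is essentially bookkeeping on $f$-values; the one point that requires care — rather than being a genuine difficulty — is to aim Definition \ref{defn:weightedmfunction} at $\beta$ and $\gamma$ rather than at $\alpha'$, because applied directly at $\alpha'$ the conditions only say "at most one" without producing an immediate contradiction, whereas applied at $\beta$ and $\gamma$ the uniqueness slot is already occupied by $\alpha$ and thereby excludes $\alpha'$.
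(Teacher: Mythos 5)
Your argument is correct and is exactly the classical Forman argument that the paper relies on: the paper states this lemma without proof (remarking only that it ``follows directly from the definitions'' as in \cite[p.~10]{forman2002user}), and your contradiction via Lemma \ref{lemma:middlesimplex} together with applying the Morse conditions at $\beta$ and $\gamma$ (whose single allowed exceptional face/coface is already occupied by $\alpha$) is the standard and intended proof. The only case you leave implicit, $n=0$, is vacuous since condition (2) then holds automatically, so nothing is missing.
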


\begin{defn}[cf.\ {\cite[p.~10]{forman2002user}}]
Let $K$ be a simplicial complex with a discrete Morse function $f$. For any $c\in\R$, we define the \emph{level subcomplex} $K(c)$ by

\begin{equation*}
K(c):=\bigcup_{\substack{\alpha\in K\\ f(\alpha)\leq c}}\Big(\bigcup_{\beta\leq\alpha}\beta\Big).
\end{equation*}

That is, $K(c)$ is the subcomplex of $K$ consisting of all simplices $\alpha\in K$ with $f(\alpha)\leq c$, together with all of their faces.
\end{defn}

We make the following basic but useful observation:
\begin{lemma}
\label{lemma:K(c)cond}
A simplex $\alpha\in K$ is in $K(c)$ if and only if $f(\alpha)\leq c$ or there exists $\beta>\alpha$ such that $f(\beta)\leq c$.
\qed
\end{lemma}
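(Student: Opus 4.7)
The proof is essentially a matter of unpacking the definition of $K(c)$, so the plan is to verify both directions of the biconditional directly. For the forward direction, I would assume $\alpha \in K(c)$ and appeal to the definition: there must exist some simplex $\alpha' \in K$ with $f(\alpha') \leq c$ such that $\alpha \leq \alpha'$. I would then split into two cases according to whether this containment is strict. If $\alpha = \alpha'$, then $f(\alpha) = f(\alpha') \leq c$, giving the first disjunct. If $\alpha < \alpha'$, then taking $\beta := \alpha'$ yields a simplex $\beta > \alpha$ with $f(\beta) \leq c$, giving the second disjunct.

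For the converse, I would suppose that either $f(\alpha) \leq c$ or there exists $\beta > \alpha$ with $f(\beta) \leq c$, and show $\alpha \in K(c)$ in each case. If $f(\alpha) \leq c$, then $\alpha$ itself indexes one of the outer sets in the union defining $K(c)$, and since $\alpha \leq \alpha$, it appears in the associated inner union. If instead $\beta > \alpha$ satisfies $f(\beta) \leq c$, then $\beta$ indexes an outer set in the union, and $\alpha \leq \beta$ places $\alpha$ in the corresponding inner union $\bigcup_{\gamma \leq \beta}\gamma$. Either way, $\alpha \in K(c)$.

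There is no real obstacle here; the only care needed is to track the distinction between the case where $\alpha$ itself is the witness simplex with $f$-value at most $c$ and the case where a strictly larger coface plays that role. Lemma \ref{lemma:middlesimplex} and the discrete Morse conditions are not needed — the statement is purely a reformulation of the set-theoretic definition of the level subcomplex.
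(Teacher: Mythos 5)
Your proof is correct and is exactly the routine unpacking of the definition of $K(c)$ that the paper has in mind when it states this lemma with no proof. Nothing further is needed.
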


\begin{lemma}[cf.\ {\cite[p.~104]{forman1998morse}}]
\label{lemma:existsn+1}
Let $\alpha^{(n)}\in K$ and suppose $\beta>\alpha$. Then there exists a $(n+1)$-simplex $\widetilde{\beta}^{(n+1)}$ with $\alpha<\widetilde{\beta}\leq\beta$ and $f(\widetilde{\beta})\leq f(\beta)$.
\qed
\end{lemma}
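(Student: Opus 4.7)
The plan is to prove this by induction on the codimension $d := \dim\beta - n \geq 1$. For the base case $d = 1$, the simplex $\beta$ itself has dimension $n+1$, so one may take $\widetilde{\beta} = \beta$, and the conclusion $f(\widetilde{\beta}) \leq f(\beta)$ holds trivially.

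For the inductive step, suppose $d \geq 2$ and set $m := \dim\beta$, so that $m \geq n+2$. The key combinatorial observation is that $\beta$ has exactly $m - n = (m+1) - (n+1)$ codimension-one faces containing $\alpha$: such a face arises by deleting a single vertex of $\beta$, and it contains $\alpha$ precisely when the deleted vertex lies outside $\alpha$. Since $m - n \geq 2$, at least two $(m-1)$-faces of $\beta$ contain $\alpha$. Applying condition (2) of Definition \ref{defn:weightedmfunction} at the simplex $\beta$, at most one $(m-1)$-face $\gamma < \beta$ can satisfy $f(\gamma) \geq f(\beta)$. Hence we may select an $(m-1)$-face $\beta'$ with $\alpha < \beta' < \beta$ and $f(\beta') < f(\beta)$. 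Since $\dim\beta' - n = d - 1$, the inductive hypothesis applied to the pair $\alpha < \beta'$ produces an $(n+1)$-simplex $\widetilde{\beta}$ with $\alpha < \widetilde{\beta} \leq \beta'$ and $f(\widetilde{\beta}) \leq f(\beta')$. Then $\widetilde{\beta} \leq \beta' \leq \beta$ and $f(\widetilde{\beta}) \leq f(\beta') < f(\beta)$, closing the induction.

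The main potential obstacle is simply confirming the count of codimension-one faces of $\beta$ containing $\alpha$; once that elementary point is settled, the discrete Morse condition at $\beta$ immediately yields the required descent. I do not expect to need Lemma \ref{lemma:middlesimplex} (which only controls the case $d = 2$) or Lemma \ref{lemma:1critfail}, since only condition (2) of the Morse function is invoked, and only at the current top simplex of each inductive stage. The strict inequality $f(\beta') < f(\beta)$ is a bonus and is not required by the statement, but it conveniently avoids any bookkeeping as the induction iterates.
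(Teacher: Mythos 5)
Your proof is correct: the count of codimension-one faces of $\beta$ containing $\alpha$ is exactly $\dim\beta-n$, so condition (2) of Definition \ref{defn:weightedmfunction} applied at $\beta$ always leaves a face $\beta'$ with $f(\beta')<f(\beta)$ when that count is at least $2$, and the induction on codimension closes cleanly. The paper states this lemma without proof (deferring to Forman), and your argument is essentially the standard one from that source, so no further comparison is needed.
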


The next lemma is a generalization of Lemma \ref{lemma:middlesimplex}.

\begin{lemma}
\label{lemma:middlesimpgen}
Suppose $\beta^{(n)}>\gamma^{(m)}$. For any integer $k$ such that $m\leq k< n$, there exists $\alpha^{(k)}$ such that $\beta^{(n)}>\alpha^{(k)}\geq\gamma^{(m)}$.

Furthermore, if $m<k<n$, there exists distinct $\widetilde{\alpha}^{(k)}\neq{\alpha}^{(k)}$ such that $\beta^{(n)}>\alpha^{(k)}>\gamma^{(m)}$ and $\beta^{(n)}>\widetilde{\alpha}^{(k)}>\gamma^{(m)}$.
\end{lemma}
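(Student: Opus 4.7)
The plan is to argue combinatorially on vertex sets. Since $K$ is an abstract simplicial complex, every nonempty subset of the vertex set of a simplex of $K$ is itself a simplex of $K$, and face containment $\gamma < \beta$ corresponds to strict inclusion $V(\gamma) \subsetneq V(\beta)$ of vertex sets. Writing $V(\beta)$ and $V(\gamma)$ for the vertex sets, we have $|V(\beta)| = n+1$, $|V(\gamma)| = m+1$, and the complement $W := V(\beta)\setminus V(\gamma)$ has $|W| = n-m$.

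For the first assertion, given $m \leq k < n$, the condition $\gamma \leq \alpha < \beta$ for a $k$-simplex $\alpha$ translates into choosing a vertex set $V(\alpha)$ with $V(\gamma) \subseteq V(\alpha) \subsetneq V(\beta)$ and $|V(\alpha)| = k+1$. Equivalently, one chooses a subset $S \subseteq W$ with $|S| = k-m$ and sets $V(\alpha) := V(\gamma)\cup S$. Such an $S$ exists precisely when $0 \leq k-m \leq n-m-1$, which holds by the hypothesis $m \leq k < n$; note that $S = \emptyset$ (forcing $\alpha = \gamma$) is the case $k=m$. The resulting $\alpha$ is a simplex of $K$ since it is a nonempty subset of $V(\beta)$, and it satisfies $\beta > \alpha \geq \gamma$ as required.

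For the ``furthermore'' part, when $m < k < n$ we have $1 \leq k-m \leq n-m-1$, so in particular $n-m \geq 2$. The distinct $k$-simplices $\alpha$ with $\beta > \alpha > \gamma$ are in bijection with the $(k-m)$-element subsets of $W$, of which there are $\binom{n-m}{k-m}$. A quick check shows $\binom{r}{j} \geq 2$ whenever $r \geq 2$ and $1 \leq j \leq r-1$, so we may select two distinct subsets $S \neq \widetilde{S}$ of $W$ and form $\alpha, \widetilde{\alpha}$ from them, yielding $\alpha \neq \widetilde{\alpha}$ with $\beta > \alpha, \widetilde{\alpha} > \gamma$.

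There is really no hard step here; the whole argument is bookkeeping with subsets. The only subtlety worth flagging is keeping track of strict versus non-strict face relations: the inequality $\alpha \geq \gamma$ in the first part allows $\alpha = \gamma$ exactly when $k = m$ (corresponding to $S = \emptyset$), whereas the strict $\alpha > \gamma$ in the ``furthermore'' part forces $k-m \geq 1$, and the strict $\alpha < \beta$ forces $k-m \leq n-m-1$; these two bounds are precisely what makes $\binom{n-m}{k-m} \geq 2$ in the second part. An alternative inductive proof using Lemma \ref{lemma:middlesimplex} repeatedly is possible, but it would be considerably more awkward than the direct counting argument above.
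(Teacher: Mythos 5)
Your proof is correct and takes essentially the same approach as the paper's: both arguments reduce the statement to choosing a suitable subset of the $n-m$ vertices of $\beta$ not in $\gamma$ and then counting via the binomial coefficient $\binom{n-m}{k-m}=\binom{n-m}{n-k}\geq 2$ in the strict case. The only cosmetic difference is that you parametrize $\alpha$ by the vertices added to $\gamma$, while the paper parametrizes by the vertices deleted from $\beta$.
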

\begin{proof}
We write $\beta=[v_0,\dots,v_n]$. We note that $\gamma$ is the simplex spanned by deleting $n-m\geq 1$ vertices $\{v_{i_1},\dots,v_{i_{n-m}}\}$ from $\{v_0,\dots,v_n\}$. We may then take $\alpha$ to be the simplex spanned by deleting $n-k\geq 1$ vertices $\{v_{j_1},\dots,v_{j_{n-k}}\}$ from $\{v_0,\dots,v_n\}$, such that $\{v_{j_1},\dots,v_{j_{n-k}}\}\subseteq\{v_{i_1},\dots,v_{i_{n-m}}\}$. Then, we have $\beta^{(n)}>\alpha^{(k)}\geq\gamma^{(m)}$.

Furthermore, if $m<k<n$, then note that $1\leq n-k<n-m$ and $n-m\geq 2$. Hence, there are ${{n-m}\choose{n-k}}\geq{2 \choose 1}=2$ ways of choosing subsets $\{v_{j_1},\dots,v_{j_{n-k}}\}\subseteq\{v_{i_1},\dots,v_{i_{n-m}}\}$, which corresponds to at least 2 distinct choices $\alpha^{(k)}$ and $\widetilde{\alpha}^{(k)}$.
\end{proof}

We now begin to generalize the main theorems of discrete Morse theory (cf.\ \cite[p.~104]{forman1998morse}) to the weighted case. The next theorem is the weighted version of Theorem 3.3 in \cite{forman1998morse}.

\begin{theorem}[cf.\ {\cite[p.~104]{forman1998morse}}]
\label{thm:maincollapse}
Let $(K,w)$ be a WSC with a discrete Morse function $f$. Let $a<b$ be real numbers. Suppose that for all simplices $\alpha\in K$, we have
\begin{equation*}
f(\alpha)\in (a,b]\implies\alpha\ \text{is non-critical and $w$-simple}.
\end{equation*}

Then, $K(b)\searrow K(a)$ is a collapse that preserves weighted homology.
\end{theorem}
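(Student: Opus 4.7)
The plan is to produce a finite sequence of elementary collapses realizing $K(b)\searrow K(a)$ and then to verify that each such collapse preserves weighted homology via Theorem~\ref{thm:sameweight}. The first (classical) step adapts Forman's proof of Theorem~3.3 in \cite{forman1998morse}, while the second (weighted) step exploits the $w$-simple hypothesis to check the equal-weight condition of Theorem~\ref{thm:sameweight} at each elementary collapse. Composing finitely many weighted-homology isomorphisms then gives the result.

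For the classical step I proceed by induction on $N:=|\{\alpha\in K:f(\alpha)\in(a,b]\}|$. When $N=0$, Lemma~\ref{lemma:K(c)cond} forces $K(b)=K(a)$. When $N\geq 1$, I select a simplex $\tau$ of dimension $n$ lying in $K(b)\setminus K(a)$ that is maximal in $K(b)$; such $\tau$ exists because, starting from any $\alpha\in K(b)\setminus K(a)$, climbing to a maximal coface of $\alpha$ within $K(b)$ yields a simplex that must stay in $K(b)\setminus K(a)$ (any coface in $K(a)$ would force $\alpha$ itself into $K(a)$). Lemma~\ref{lemma:K(c)cond} together with this maximality yields $f(\tau)\in(a,b]$, so $\tau$ is non-critical by hypothesis. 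The maximality also ensures that $\tau$ has no $(n+1)$-coface $\beta$ with $f(\beta)\leq f(\tau)$, since any such $\beta$ would satisfy $f(\beta)\leq b$ and thus belong to $K(b)$, contradicting maximality. Hence non-criticality of $\tau$ must be witnessed by the face condition, producing a unique face $\sigma^{(n-1)}<\tau$ with $f(\sigma)\geq f(\tau)$. A short argument combining the uniqueness clauses in Definition~\ref{defn:weightedmfunction} with the maximality of $\tau$ shows that $\sigma$ is a free face of $\tau$ in $K(b)$, enabling the elementary collapse $K(b)\searrow K(b)\setminus\{\sigma,\tau\}$; the induction hypothesis then finishes the argument after noting that the restricted Morse function on the smaller complex still satisfies the hypotheses of the theorem.

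For the weighted step, each pair $(\sigma^{(n-1)},\tau^{(n)})$ produced above satisfies $\sigma<\tau$, $f(\sigma)\geq f(\tau)$, and $f(\tau)\in(a,b]$. The hypothesis makes $\tau$ $w$-simple; applied with the face $\sigma<\tau$ satisfying $f(\sigma)\geq f(\tau)$, this immediately yields $w(\sigma)=w(\tau)\neq 0$, so Theorem~\ref{thm:sameweight} guarantees that each elementary collapse preserves weighted homology. The main obstacle lies entirely in the classical step: verifying that $\sigma$ is genuinely a free face of $\tau$ in $K(b)$ (rather than in the abstract ambient complex $K$) requires careful use of the maximality of $\tau$ together with the uniqueness clauses in the Morse-function definition, mirroring Forman's original argument; once this piece is in place, the weighted refinement is an immediate application of Theorem~\ref{thm:sameweight}.
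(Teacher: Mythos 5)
Your overall strategy (peel off one free pair at a time, check $w(\sigma)=w(\tau)$ via $w$-simplicity, invoke Theorem~\ref{thm:sameweight}) is the right one, and your weighted step is exactly what the paper does. But the classical step has a genuine gap: for an \emph{arbitrary} maximal simplex $\tau$ of $K(b)$ lying in $K(b)\setminus K(a)$, the distinguished face $\sigma^{(n-1)}<\tau$ with $f(\sigma)\geq f(\tau)$ need \emph{not} be a free face of $\tau$ in $K(b)$, and no combination of the uniqueness clauses with maximality will rescue this. Concretely, take $K$ to be two $2$-simplices $[1,2,3]$ and $[1,2,4]$ glued along $[1,2]$, with $f([1,2,3])=0.5$, $f([1,2])=0.6$, $f([1,2,4])=0.7$, $f([1,4])=0.8$, all other simplices assigned values $\leq 0$, and $(a,b]=(0,1]$. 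One checks that $f$ is a discrete Morse function and every simplex in $f^{-1}((0,1])$ is non-critical, so the hypotheses of the theorem hold (for any choice of weights making these simplices $w$-simple). Here $K(b)=K$, and $\tau=[1,2,3]$ is maximal in $K(b)$ and lies in $K(b)\setminus K(a)$, with paired face $\sigma=[1,2]$; but $[1,2]$ has the second coface $[1,2,4]\in K(b)$, so $(\sigma,\tau)$ is not an elementary collapse pair. The collapses must be performed in the order $([1,4],[1,2,4])$ first, then $([1,2],[1,2,3])$; the uniqueness clause only tells you $f([1,2,4])>f([1,2])$, which does not expel $[1,2,4]$ from $K(b)$. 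The paper sidesteps this ordering problem by first reducing to the case $|f^{-1}((a,b])|=1$ (partitioning $(a,b]$ into subintervals each containing the value of exactly one simplex); in that situation any other coface $S>\sigma$ would force $f(S)>b$ and then a separate argument shows $S\notin K(b)$. Your induction on $N=|f^{-1}((a,b])|$ could be repaired by choosing the pair $(\sigma,\tau)$ according to the $f$-values (e.g.\ processing pairs in decreasing order of $f(\sigma)$), but that is essentially re-deriving the paper's interval subdivision, not a ``short argument from maximality.''

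A secondary, more repairable gap: your inductive step asserts without proof that ``the restricted Morse function on the smaller complex still satisfies the hypotheses.'' Removing $\{\sigma,\tau\}$ can in principle destroy the non-criticality of a simplex whose only witness was $\sigma$ or $\tau$, and one must also check that the level subcomplexes at $a$ and $b$ of the smaller complex coincide with $K(a)$ and $K(b)\setminus\{\sigma,\tau\}$. These facts are true here (Lemma~\ref{lemma:1critfail} shows $\tau$ can only witness the non-criticality of $\sigma$ and vice versa, and Lemma~\ref{lemma:middlesimpgen} handles the level subcomplexes), but they need to be stated and proved; the paper's formulation avoids the issue because it never restricts the Morse function to a proper subcomplex, instead comparing level subcomplexes of the fixed ambient $K$.
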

\begin{proof}
If $f^{-1}((a,b])=\emptyset$, then clearly $K(a)=K(b)$ and we are done. 

We first prove the result for the case $|f^{-1}((a,b])|=1$, where there is exactly one simplex $\alpha^{(n)}$ with $f(\alpha)\in(a,b]$. By hypothesis, $\alpha$ is non-critical and $w$-simple.

By Lemma \ref{lemma:1critfail}, exactly 1 of the following is true:
\begin{enumerate}
\item There exists $\beta^{(n+1)}>\alpha$ with $f(\beta)\leq f(\alpha)$.
\item There exists $\gamma^{(n-1)}<\alpha$ with $f(\gamma)\geq f(\alpha)$.
\end{enumerate}

For case (1), we have $f(\beta)\leq a$, since $\alpha$ is the only simplex taking values in $(a,b]$ when applying the function $f$. Thus, $\beta\in K(a)$. Since $\alpha\leq\beta$, hence $\alpha$ is in $K(a)$ too. Thus, $K(a)=K(b)$ and the proof is finished.

From now on, we suppose case (2) is true. Since case (1) is not true, hence for all $\beta^{(n+1)}>\alpha$ we have $f(\beta)>f(\alpha)$, and in fact $f(\beta)>b$. Now, suppose to the contrary there exists $\sigma>\alpha$ with $f(\sigma)\leq a$. By Lemma \ref{lemma:existsn+1}, there exists $\widetilde{\beta}^{(n+1)}$ with $\alpha<\widetilde{\beta}\leq\sigma$ and $f(\widetilde{\beta})\leq f(\sigma)\leq a$. This is a contradiction, as previous arguments imply that $f(\widetilde{\beta})>b$. In short, there does not exist $\sigma>\alpha$ with $f(\sigma)\leq a$. By Lemma \ref{lemma:K(c)cond}, we can conclude that $\alpha\notin K(a)$.

Since case (2) is true, there exists $\gamma^{(n-1)}<\alpha$ with $f(\gamma)\geq f(\alpha)$, and in fact $f(\gamma)>b$. Suppose to the contrary there exists $\tau>\gamma$ with $f(\tau)\leq a$. By Lemma \ref{lemma:existsn+1}, there exists $\widetilde{\tau}^{(n)}$ with $\gamma<\widetilde{\tau}\leq\tau$ and $f(\widetilde{\tau})\leq f(\tau)\leq a$. Clearly, $\widetilde{\tau}\neq\alpha$ since $f(\alpha)\in (a,b]$. However by the definition of discrete Morse function, we have $f(\widetilde{\tau})>f(\gamma)>b$. This is a contradiction, and by Lemma \ref{lemma:K(c)cond}, we can conclude that $\gamma\notin K(a)$.

Next, we show that $\gamma^{(n-1)}$ is a free face of $\alpha^{(n)}$ (in $K(b)$). Suppose to the contrary there exists a simplex $S\in K(b)$ with $S>\gamma$ and $S\neq \alpha$. Note that it is not possible that $f(S)\leq b$, since $f(S)\in (a,b]$ contradicts $S\neq\alpha$ and $f(S)\leq a$ contradicts $\gamma\notin K(a)$. Thus, by Lemma \ref{lemma:K(c)cond}, there exists a simplex $T>S$ with $f(T)\leq b$. Clearly, $T\neq\alpha$ since $\dim T>\dim S\geq n=\dim\alpha$. We see that $f(T)\leq b$ cannot be possible for the same reasons why $f(S)\leq b$ is not possible. This is a contradiction, and hence $\gamma^{(n-1)}$ is a free face of $\alpha^{(n)}$ (in $K(b)$).

It is now clear that $K(a)\subseteq K(b)\setminus\{\gamma,\alpha\}$. Let $\sigma\in K(b)\setminus\{\gamma,\alpha\}$. If $f(\sigma)\leq b$, then $f(\sigma)\leq a$ since $\sigma\neq\alpha$. Hence, $\sigma\in K(a)$. Suppose $f(\sigma)>b$, then by Lemma \ref{lemma:K(c)cond}, there exists $\tau>\sigma$ such that $f(\tau)\leq b$. If $\tau\neq\alpha$, then again $f(\tau)\leq a$ so that $\sigma\in K(a)$. If $\tau=\alpha$, by Lemma \ref{lemma:middlesimpgen} there exists $\widetilde{\alpha}^{(n-1)}$ such that $\alpha^{(n)}>\widetilde{\alpha}^{(n-1)}\geq\sigma$. If $\widetilde{\alpha}^{(n-1)}\neq\gamma^{(n-1)}$, then we have $f(\widetilde{\alpha})<f(\alpha)\leq b$. This implies that $f(\widetilde{\alpha})\leq a$ and hence $\sigma\in K(a)$. If $\widetilde{\alpha}=\gamma$, then we have $\alpha>\gamma>\sigma$ since $\sigma\neq\gamma$. By Lemma \ref{lemma:middlesimpgen}, there exists $\widetilde{\gamma}^{(n-1)}\neq\gamma$ such that $\alpha>\widetilde{\gamma}^{(n-1)}>\sigma$. Similarly, this implies that $f(\widetilde{\gamma})\leq a$ and hence $\sigma\in K(a)$. In conclusion, we have proved that $K(b)\setminus\{\gamma,\alpha\}\subseteq K(a)$ and hence $K(a)=K(b)\setminus\{\gamma,\alpha\}$.

Since $\alpha$ is $w$-simple, hence $w(\gamma)=w(\alpha)\neq 0$. By Theorem \ref{thm:sameweight}, we have that $K(b)\searrow K(a)$ is an elementary collapse that preserves weighted homology. We have completed the proof for the case $|f^{-1}((a,b])|=1$.

Finally, for the case $|f^{-1}((a,b])|=k>1$, we partition the interval $(a,b]$ into $k$ smaller subintervals
\begin{equation*}
(a,b]=(b_0,b_1]\cup(b_1,b_2]\cup\dots\cup(b_{k-1},b_k],
\end{equation*}
where $b_0=a$, $b_k=b$, such that each subinterval $(b_i,b_{i+1}]$ has exactly one simplex $\alpha_i$ with $f(\alpha_i)\in(b_i,b_{i+1}]$. Then, we have a series of elementary collapses
\begin{equation*}
K(b)\searrow K(b_{k-1})\searrow\dots\searrow K(a),
\end{equation*}
each of which preserves weighted homology. It follows that $K(b)\searrow K(a)$ is a collapse that preserves weighted homology.
\end{proof}

The next theorem generalizes Theorem 3.4 in \cite{forman1998morse} and Lemma 2.7 in \cite{forman2002user}.

\begin{theorem}[cf.\ {\cite[p.~106]{forman1998morse}}]
\label{thm:mainremoval}
Let $(K,w)$ be a WSC with a discrete Morse function $f$. Suppose $\alpha^{(n)}$ is a critical simplex with $f(\alpha)\in(a,b]$ and $f^{-1}(a,b]$ contains no other critical simplices.

Let
\begin{equation*}
a'=\min\{x\in [a,f(\alpha))\mid f^{-1}(x,f(\alpha)]=\{\alpha\}\}.
\end{equation*}
Then, we have
\begin{equation*}
K(a')=K(f(\alpha))\setminus\{\alpha\}, 
\end{equation*}
where $\alpha$ is a maximal face of $K(f(\alpha))$.

Moreover, if all simplices in
\begin{equation*}
f^{-1}(a,a']\cup f^{-1}(f(\alpha),b]
\end{equation*}
are $w$-simple, then $K(b)\searrow K(f(\alpha))$ and $K(a')\searrow K(a)$ are collapses that preserve weighted homology.
\end{theorem}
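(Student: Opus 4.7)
The plan is to reduce the theorem to two set-theoretic facts about level subcomplexes, and then invoke Theorem \ref{thm:maincollapse} for the homology-preserving collapses. The two facts are: (A) $\alpha$ is a maximal face of $K(f(\alpha))$, and (B) $K(a')=K(f(\alpha))\setminus\{\alpha\}$. Granting (A) and (B), the ``moreover'' assertion then follows by applying Theorem \ref{thm:maincollapse} to the sub-intervals $(f(\alpha),b]$ and $(a,a']$: every simplex with $f$-value in either interval is non-critical (since the only critical simplex of $f^{-1}(a,b]$, namely $\alpha$, has $f(\alpha)$ in neither sub-interval) and is $w$-simple by hypothesis.

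For (A), I argue by contradiction. Suppose $\beta>\alpha$ with $\beta\in K(f(\alpha))$. Lemma \ref{lemma:K(c)cond} produces $\tau\geq\beta$ with $f(\tau)\leq f(\alpha)$, and Lemma \ref{lemma:existsn+1} then yields an $(n+1)$-simplex $\widetilde{\beta}$ with $\alpha<\widetilde{\beta}\leq\tau$ and $f(\widetilde{\beta})\leq f(\tau)\leq f(\alpha)$. This contradicts criticality of $\alpha$, which forbids any $(n+1)$-coface with $f$-value $\leq f(\alpha)$.

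For (B), first note that $\alpha\notin K(a')$: we have $f(\alpha)>a'$, and by (A) no $\beta>\alpha$ sits inside $K(f(\alpha))\supseteq K(a')$, so Lemma \ref{lemma:K(c)cond} rules $\alpha$ out of $K(a')$. Conversely, let $\sigma\in K(f(\alpha))\setminus\{\alpha\}$. If $f(\sigma)\leq f(\alpha)$, then $f^{-1}(a',f(\alpha)]=\{\alpha\}$ forces $f(\sigma)\leq a'$, hence $\sigma\in K(a')$. If instead $f(\sigma)>f(\alpha)$, then Lemma \ref{lemma:K(c)cond} supplies $\tau>\sigma$ with $f(\tau)\leq f(\alpha)$; when $\tau\neq\alpha$ we get $f(\tau)\leq a'$ immediately, and when $\tau=\alpha$ we have $\sigma<\alpha$. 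In this last case criticality of $\alpha$ excludes $\dim\sigma=n-1$, so $\dim\sigma<n-1$ and Lemma \ref{lemma:existsn+1} furnishes a $(\dim\sigma+1)$-simplex $\widetilde{\tau}\neq\alpha$ with $\sigma<\widetilde{\tau}\leq\alpha$ and $f(\widetilde{\tau})\leq f(\alpha)$; since $\widetilde{\tau}\neq\alpha$ we conclude $f(\widetilde{\tau})\leq a'$ and hence $\sigma\in K(a')$.

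The main obstacle I foresee is the dimension bookkeeping in the subcase $\tau=\alpha$ of (B): both the criticality of $\alpha$ (to exclude $\dim\sigma=n-1$) and Lemma \ref{lemma:existsn+1} (to produce an intermediate simplex distinct from $\alpha$ when $\dim\sigma<n-1$) are indispensable. Everything else is a routine application of the level-set lemmas and Theorem \ref{thm:maincollapse}.
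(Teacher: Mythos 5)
Your proposal is correct and follows essentially the same route as the paper: establish the two set-theoretic facts about $K(a')$ and $K(f(\alpha))$, then invoke Theorem \ref{thm:maincollapse} on the non-critical, $w$-simple simplices in $f^{-1}(a,a']$ and $f^{-1}(f(\alpha),b]$. The only cosmetic difference is that in the subcase $\sigma<\alpha$ you use Lemma \ref{lemma:existsn+1} with the dimension argument to produce an intermediate coface distinct from $\alpha$, whereas the paper reaches the same conclusion via Lemma \ref{lemma:middlesimpgen} by showing every proper face of $\alpha$ already lies in $K(a')$; both are valid.
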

\begin{proof}
Since $K$ is finite, we note that $a'$ is well-defined and always exists. The definition of $a'$ implies that there are no simplices in $f^{-1}(a',f(\alpha)]$ other than $\alpha$.

We show that $\alpha\notin K(a')$. Suppose to the contrary there exists $\beta>\alpha$ such that $f(\beta)\leq a'$. By Lemma \ref{lemma:existsn+1}, there exists $\widetilde{\beta}^{(n+1)}$ with $\alpha<\widetilde{\beta}\leq\beta$ and
\begin{equation}
\label{eqn:criticalmid}
f(\widetilde{\beta})\leq f(\beta)\leq a'<f(\alpha).
\end{equation}
However, this contradicts the fact that $\alpha$ is critical. By Lemma \ref{lemma:K(c)cond}, we conclude that $\alpha\notin K(a')$.

Since $\alpha$ is critical, for every $\gamma^{(n-1)}<\alpha$ we have $f(\gamma)<f(\alpha)$. Since there are no simplices in $f^{-1}(a',f(\alpha)]$ other than $\alpha$, this implies $f(\gamma)\leq a'$ and hence $\gamma\in K(a')$. By Lemma \ref{lemma:middlesimpgen}, any proper face $\sigma^{(m)}<\alpha^{(n)}$ is also a face of some $\gamma^{(n-1)}<\alpha^{(n)}$. Hence, $\sigma\in K(a')$ by Lemma \ref{lemma:K(c)cond}. We have shown that all proper faces of $\alpha$ lie in $K(a')$.

We now show that $\alpha$ is a maximal face of $K(f(\alpha))$. For any $\tau>\alpha$, by arguments similar to that preceding \eqref{eqn:criticalmid}, we conclude that $f(\tau)>a'$. Since $f^{-1}(a',f(\alpha)]=\{\alpha\}$, we conclude that $f(\tau)>f(\alpha)$. This also implies that $\alpha$ cannot have a proper coface $S>\alpha$ where $S\in K(f(\alpha))$, and hence $\alpha$ is a maximal face of $K(f(\alpha))$.

It is clear that $K(a')\subseteq K(f(\alpha))\setminus\{\alpha\}$. Let $\sigma\in K(f(\alpha))\setminus\{\alpha\}$. By Lemma \ref{lemma:K(c)cond}, $f(\sigma)\leq f(\alpha)$ or there exists $\tau>\sigma$ such that $f(\tau)\leq f(\alpha)$. First, we analyze the case $f(\sigma)\leq f(\alpha)$. Since $f^{-1}(a',f(\alpha)]=\{\alpha\}$, hence $f(\sigma)\leq a'$ and $\sigma\in K(a')$. In the second case where $\tau>\sigma$ and $f(\tau)\leq f(\alpha)$, either $f(\tau)\leq a'$ (and hence $\sigma<\tau$ is in $K(a')$) or $\tau=\alpha$ (and hence $\sigma\in K(a')$ since $\sigma$ is a proper face of $\alpha$). In both cases, we have $\sigma\in K(a')$. We have shown that $K(a')=K(f(\alpha))\setminus\{\alpha\}$.

Note that all simplices in $f^{-1}(a,a']\cup f^{-1}(f(\alpha),b]$ are non-critical. Hence, if all simplices in $f^{-1}(a,a']\cup f^{-1}(f(\alpha),b]$ are $w$-simple, then by Theorem \ref{thm:maincollapse}, $K(b)\searrow K(f(\alpha))$ and $K(a')\searrow K(a)$ are collapses that preserve weighted homology.
\end{proof}

\begin{remark}
In our proofs of Theorems \ref{thm:maincollapse} and \ref{thm:mainremoval}, we do not assume (without loss of generality) that the discrete Morse function $f$ is injective, unlike the proofs in \cite{forman1998morse}.
\end{remark}

The relationship between the weighted homologies of $K(f(\alpha))$ and $K(a')$ is summarized in the following corollary.

\begin{cor}
Assume that all conditions of Theorem \ref{thm:mainremoval} hold, including the assumption that all simplices in $f^{-1}(a,a']\cup f^{-1}(f(\alpha),b]$ are $w$-simple. Moreover, suppose $w(\alpha)\neq 0$ and $R$ is a PID.

Then, we have
\begin{enumerate}
\item 
\begin{equation*}
H_k(K(a'),w)\cong H_k(K(f(\alpha)),w),
\end{equation*}
for $k\notin\{n-1,n\}$.
\item 
\begin{equation*}
H_{n-1}(K(f(\alpha)),w)\cong H_{n-1}(K(a'),w)/\{r[\partial^w\alpha]\mid r\in R\},
\end{equation*}
where $\partial^w$ denotes the weighted boundary operator of $C_*(K(f(\alpha)),w)$, and $[\partial^w\alpha]$ denotes the homology class in $H_{n-1}(K(a'),w)$.
\item
\begin{equation*}
H_n(K(f(\alpha)),w)\cong
\begin{cases}
H_n(K(a'),w)\oplus R &\text{if $[\partial^w\alpha]$ is a torsion element,}\\
H_n(K(a'),w) &\text{otherwise.}
\end{cases}
\end{equation*}
\end{enumerate}
\end{cor}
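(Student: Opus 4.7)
The plan is to recognize the corollary as a direct application of the elementary removal machinery (Theorem \ref{thm:removalhom} and Corollary \ref{cor:removalhom}) to the pair of subcomplexes produced by Theorem \ref{thm:mainremoval}. No new homological algebra should be needed; the argument is essentially one of matching hypotheses.

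First I would invoke Theorem \ref{thm:mainremoval}: under the standing assumptions (a critical simplex $\alpha^{(n)}$ with $f(\alpha) \in (a,b]$, $f^{-1}(a,b]$ containing no other critical simplex, and $w$-simplicity of every simplex in $f^{-1}(a,a'] \cup f^{-1}(f(\alpha),b]$), the theorem yields the identification $K(a') = K(f(\alpha)) \setminus \{\alpha\}$ with $\alpha$ a maximal face of $K(f(\alpha))$. This is precisely the setup of an elementary removal, with the role of $K$ played by $K(f(\alpha))$ and the role of $L$ played by $K(a')$. The extra hypothesis $w(\alpha) \neq 0$ exactly matches the nonzero-weight assumption of Theorem \ref{thm:removalhom}.

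Next I would apply Theorem \ref{thm:removalhom} to the pair $(K(f(\alpha)), K(a'))$. Parts (1) and (2) of the corollary then follow by direct substitution: part (1) is statement (1) of Theorem \ref{thm:removalhom}, and part (2) is statement (2), where $\partial^w$ is the weighted boundary operator on $C_*(K(f(\alpha)),w)$ and $[\partial^w \alpha]$ is read as a homology class in $H_{n-1}(K(a'),w)$. For part (3), I would appeal to Corollary \ref{cor:removalhom}, whose PID hypothesis is precisely the additional assumption imposed in the statement; this gives the torsion/non-torsion dichotomy
\begin{equation*}
H_n(K(f(\alpha)),w) \cong
\begin{cases}
H_n(K(a'),w) \oplus R & \text{if } [\partial^w \alpha] \text{ is torsion,}\\
H_n(K(a'),w) & \text{otherwise,}
\end{cases}
\end{equation*}
completing the three bullet points.

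Since the proof is essentially a translation step, there is no real obstacle; the only thing worth being careful about is confirming that the hypotheses of Theorem \ref{thm:removalhom} and Corollary \ref{cor:removalhom} are satisfied verbatim in the present context. Specifically, the $w$-simplicity hypothesis in the corollary is used only to justify invoking Theorem \ref{thm:mainremoval} (so that $K(a') = K(f(\alpha)) \setminus \{\alpha\}$ actually holds); it is not needed a second time in the removal step, because Theorem \ref{thm:removalhom} requires only maximality of $\alpha$ and $w(\alpha)\neq 0$, both of which are in hand.
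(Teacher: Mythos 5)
Your proposal is correct and follows exactly the paper's argument: the paper's proof is a one-line invocation of Theorem \ref{thm:removalhom} and Corollary \ref{cor:removalhom} applied to the identification $K(a')=K(f(\alpha))\setminus\{\alpha\}$ furnished by Theorem \ref{thm:mainremoval}. Your additional care in checking that the hypotheses ($\alpha$ maximal, $w(\alpha)\neq 0$, $R$ a PID) match verbatim, and your observation that $w$-simplicity is needed only to invoke Theorem \ref{thm:mainremoval}, are accurate refinements of the same approach.
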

\begin{proof}
The proof is obtained by applying Theorem \ref{thm:removalhom} and Corollary \ref{cor:removalhom} to Theorem \ref{thm:mainremoval}.
\end{proof}
\section{Application to Sequence Analysis}
\label{sec:sequence}
In this section, we apply our results to the weighted homology of sequences from an alphabet $\mathcal{A}$, with a view towards DNA/RNA sequences. We emphasize that the majority of our results are equally applicable to other sequences, including binary sequences and hexadecimal sequences. We work out mathematical statements on the poset of substrings of a sequence. To our knowledge, the authors V.\ Bankston et al.\ \cite{rafal2018dna} are the first to study the topology of DNA using the sequential subword order. Our approach uses one idea of \cite{rafal2018dna}, which is to construct the poset using substrings of a sequence. The general topic of poset topology \cite{wachs2006poset} originally arose from the seminal 1964 paper of Gian-Carlo Rota on the M\"{o}bius function of a partially ordered set.

In the paper \cite{ligeti2005automorphisms}, the authors P.\ Ligeti and P.\ Sziklai study DNA using posets through another approach, focusing on the automorphism group of the posets instead of the topology.

\begin{defn}
A sequence of length $n$ over an alphabet $\mathcal{A}$ is an ordered tuple $(x_1,x_2,\dots,x_n)$, where $x_i\in\mathcal{A}$ for all $i$. For convenience, we will write the sequence $(x_1,x_2,\dots,x_n)$ as $x_1x_2\dots x_n$.
\end{defn}

\begin{eg}
A DNA sequence is a sequence over the alphabet $\mathcal{A}_\text{dna}=\{A,C,G,T\}$. A, C, G, T stand for the nitrogenous bases adenine, cytosine, guanine, and thymine respectively.
\end{eg}

\begin{eg}
An RNA sequence is a sequence over the alphabet $\mathcal{A}_\text{rna}=\{A,C,G,U\}$. U stands for the nitrogenous base uracil.
\end{eg}

\begin{eg}
A binary sequence is a sequence over the alphabet $\mathcal{A}_{\text{bin}}=\{0,1\}$. A hexadecimal sequence is a sequence over the alphabet $\mathcal{A}_\text{hex}=\{0,1,\dots,9,A,B,\dots,F\}$.
\end{eg}

\begin{defn}
Let $S=x_1x_2\dots x_n$ be a sequence over an alphabet $\mathcal{A}$. A \emph{substring} of $S$ is a sequence $T=x_{1+i}x_{2+i}\dots x_{m+i}$, where $0\leq i$ and $m+i\leq n$.
\end{defn}

\begin{remark}
We use the terminology ``substring'' as it is a well-recognized term in computer science and computational biology \cite{gusfield1997algorithms}. In \cite{rafal2018dna}, the authors use the terminology ``subword''. Note that other sources \cite{bjorner1990mobius,mcnamara2012mobius,sagan2006mobius} have different meanings for the term ``subword'' (essentially their definition of ``subword'' is that of a subsequence).
\end{remark}

\begin{eg}
Consider the DNA sequence $S=ACTGG$. Then $AC$ is a substring of $S$, but $AT$ is \emph{not} a substring of $S$. ($AT$ is a subsequence of $S$.)
\end{eg}

One of the key ideas in \cite{rafal2018dna} is to define a partial order $\preceq$ as follows:

\begin{defn}[cf.\ {\cite{rafal2018dna}}]
 We define the binary relation $\preceq$ on the set of all possible sequences over an alphabet $\mathcal{A}$ by: 
 
 $T\preceq S$ if and only if $T$ is a substring of $S$.
\end{defn}

It is clear that $\preceq$ is a partial order on the set of all possible sequences over $\mathcal{A}$. In particular, the set of substrings of a sequence $S$ is a poset. 

\begin{remark}
We remark that the similar idea of constructing a poset using subsequences instead of substrings has been studied extensively in \cite{bjorner1990mobius,mcnamara2012mobius,sagan2006mobius}. An advantage of using substrings is that a sequence $S$ typically has much fewer substrings than subsequences, hence resulting in a smaller poset which is easier to work with.
\end{remark}

Once we have a poset, the next step is to construct the order complex \cite{wachs2006poset}. Our approach differs from \cite{rafal2018dna} in that we only consider proper substrings in the poset.

\begin{defn}[{cf.\ \cite[p.~6]{wachs2006poset}}]
Let $S$ be a sequence and $(P_S,\preceq)$ be the poset of proper substrings of $S$ (i.e., excluding the substring $S$ itself and the empty substring). The \emph{order complex} of $P_S$, denoted $\Delta(P_S)$, is an abstract simplicial complex whose vertices are the substrings in $P_S$, and the faces of $\Delta(P_S)$ are the chains (i.e.\ totally ordered subsets) of $P_S$.
\end{defn}

\begin{remark}
Each distinct proper substring only appears once in $P_S$ (even if it occurs multiple times in $S$) since $P_S$ is a set. We only consider proper substrings of $S$ in $P_S$, due to the fact that a poset with a unique maximum (or minimum) element has a contractible order complex, since it is a cone (cf.\ \cite[p.~8]{wachs2006poset}). Hence, by only considering proper substrings, we get a more interesting order complex that is not necessarily contractible.
\end{remark}

\begin{eg}
\label{eg:leucine}
We consider the DNA sequence $S=CTC$, which is the DNA codon for the amino acid leucine. Then, the poset $P_S$ can be represented by the Hasse diagram in Figure \ref{fig:hasse}. The order complex $\Delta(P_S)$ is the simplicial complex shown in Figure \ref{fig:circle}, which is homotopy equivalent to a circle (not contractible).

\begin{figure}[htbp]
\begin{center}
\begin{tikzpicture}
  \node (a) at (0,0) {CT};
  \node (b) at (2,0) {TC};
  \node (d) at (0,-1) {C};
  \node (e) at (2,-1) {T};
  \draw (a)--(d)
  (b)--(e)
  (b)--(d);
  \draw[preaction={draw=white, -,line width=6pt}] (a)--(e);
\end{tikzpicture}
\caption{Hasse diagram for the poset $P_S$.}
\label{fig:hasse}
\end{center}
\end{figure}
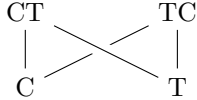

\begin{figure}[htbp]
\begin{center}
\begin{tikzpicture}
\filldraw 
(0,0) circle (2pt) node[align=left,above] {C}
(1,0) circle (2pt) node[align=left,above] {TC}  
(0,-1) circle (2pt) node[align=left,below] {CT}
(1,-1) circle (2pt) node[align=left,below] {T};
\draw (0,0)--(1,0)--(1,-1)--(0,-1)--(0,0);
\end{tikzpicture}
\caption{The order complex $\Delta(P_S)$.}
\label{fig:circle}
\end{center}
\end{figure}
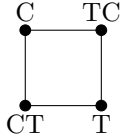
\end{eg}

\begin{remark}
\label{rem:cannotdiff}
Consider the DNA sequence $S'=GTG$, which is the DNA codon for the amino acid valine. It is clear that no topological invariant, including homology, can distinguish between the order complexes $\Delta(P_S)$ in Example \ref{eg:leucine} and $\Delta(P_{S'})$ since they only differ by a relabeling of vertices.
\end{remark}

In order to remedy the issue in Remark \ref{rem:cannotdiff}, we introduce the \emph{weighted ordered complex} (or WOC for short). For $R=\Z$, theoretically there are infinitely many weight functions $w: K\to\Z$ for a simplicial complex $K$. Hence, we emphasize that there are infinitely many different ways of defining a weighted ordered complex $(\Delta(P_S),w)$, and the most effective choice of definition may depend on the actual context of the application. We outline four such possible definitions.

\begin{defn}[Type 1 weighted ordered complex (Type 1 WOC)]
Let $S$ be a sequence over an alphabet $\mathcal{A}$. Let $(P_S,\preceq)$ be the associated poset of proper substrings of $S$. We define a weight function $w: \Delta(P_S)\to\Z$ by first assigning a weight (in $\Z$) for each letter $x\in\mathcal{A}$, and then extending to the vertices of $\Delta(P_S)$ by
\begin{equation}
\label{eq:firstlcm}
w(x_1x_2\dots x_n):=\LCM(w(x_1),\dots,w(x_n)),
\end{equation}
where $\LCM$ denotes the lowest common multiple. We then further extend $w$ to the faces of $\Delta(P_S)$ by defining
\begin{equation}
\label{eq:secondlcm}
w(\sigma):=\LCM(w(v_0),\dots,w(v_k)),
\end{equation}
for each $k$-simplex $\sigma=[v_0,v_1,\dots,v_k]$ of $\Delta(P_S)$ spanned by the vertices $v_0, \dots, v_k$.

We call $(\Delta(P_S),w)$ the \emph{weighted ordered complex} with weight function $w$.
\end{defn}

\begin{defn}[Type 2 WOC]
The definition follows that of the Type 1 weighted ordered complex, except replacing $\LCM$ with the product in Equation \eqref{eq:secondlcm}. That is, $w(\sigma)=\prod_{i=0}^k w(v_i)$.
\end{defn}

\begin{defn}[Type 3 WOC]
The definition follows that of the Type 1 WOC, except replacing $\LCM$ with the product in Equation \eqref{eq:firstlcm}. That is, $w(x_1x_2\dots x_n)=\prod_{i=1}^n w(x_i)$.
\end{defn}

\begin{defn}[Type 4 WOC]
The definition follows that of the Type 1 WOC, except replacing $\LCM$ with the product in both Equations \eqref{eq:firstlcm} and \eqref{eq:secondlcm}.
\end{defn}

It is clear that the above four types of weighted ordered complexes are weighted simplicial complexes in the sense of Definition \ref{wscdef}. 

We demonstrate that a suitable choice of weights can distinguish between certain DNA codons (i.e.\ DNA sequences of length 3) via the weighted homology of their weighted ordered complexes.

\begin{eg}
\label{eg:3amino}
Let $S=CTC$ and $S'=GTG$ be DNA sequences. Recall from Remark \ref{rem:cannotdiff} that unweighted homology is unable to tell apart their ordered complexes. Define the weights on $\mathcal{A}_\text{dna}=\{A, C, G, T\}$ by $w(A)=1$, $w(C)=2$, $w(G)=3$, and $w(T)=4$, and we extend the weight function to the faces of $\Delta(P_S)$ and $\Delta(P_{S'})$ by the definition for Type 2 WOC. For instance, we have $w(TC)=w(CT)=4$, $w([C,TC])=8$, and so on. We calculate that 
\begin{equation*}
\begin{split}
H_0(\Delta(P_S),w)&\cong \Z\oplus\Z/2\oplus\Z/2\oplus\Z/4,\\
H_0(\Delta(P_{S'}),w)&\cong \Z\oplus\Z/12.
\end{split}
\end{equation*}

Let $S''=AAA$, which is the DNA codon for the amino acid lysine. We use the same system of weights as before. The order complex of $S''$ is a 1-simplex, with weight 1 for all its faces. The 0th weighted homology of $\Delta(P_{S''})$ is therefore
\begin{equation*}
H_0(\Delta(P_{S''}),w)\cong \Z.
\end{equation*}
\end{eg}

\begin{remark}
We do not make the claim that weighted homology is able to distinguish between \emph{all} DNA sequences. In fact, with the weights defined in Example \ref{eg:3amino} (Type 2 WOC), we calculate that the 0th weighted homology of the sequence $S'''=CCT$ is the same as that of $S=CTC$. 

On the other hand, with a different choice of weights ($w'(A)=1$, $w'(C)=2$, $w'(G)=1$, and $w'(T)=3$), and extending $w'$ by the definition for Type 2 WOC, we can distinguish between the two sequences:
\begin{equation*}
\begin{split}
H_0(\Delta(P_{S}),w')&\cong \Z\oplus\Z/6,\\
H_0(\Delta(P_{S'''}),w')&\cong \Z\oplus\Z/2\oplus\Z/6.
\end{split}
\end{equation*}
\end{remark}

\subsection{Discrete Morse Theory and Weighted Ordered Complexes}
We apply our previous results on weighted discrete Morse theory to the study of weighted ordered complexes. We study some examples where weighted discrete Morse theory can be applied to calculate weighted homology.

\begin{prop}
Let $x$, $y$ be distinct letters in an alphabet $\mathcal{A}$. Consider the sequence $S=xyyy$. Let $w(x)=a$, $w(y)=b$, where $a,b\in\Z_{>0}$. We extend $w$ to $\Delta(P_S)$ by the definition of Type 3 WOC. Then,

\begin{equation*}
H_i(\Delta(P_S),w)\cong\begin{cases}
\Z\oplus\Z/\gcd(a,b) &\text{if $i=0$},\\
0 &\text{if $i\geq 1$}.
\end{cases}
\end{equation*}
\end{prop}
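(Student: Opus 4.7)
The plan is to first describe $\Delta(P_S)$ explicitly, then apply a sequence of four elementary collapses that preserve weighted homology (via Theorem \ref{thm:sameweight}) to reduce $\Delta(P_S)$ to a weighted tree, and finally compute the weighted homology of that tree by Smith normal form.

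First I would enumerate the poset $P_S = \{x, y, xy, yy, xyy, yyy\}$ of proper substrings of $S = xyyy$. Under $\preceq$ there are nine comparable pairs (giving nine edges in $\Delta(P_S)$) together with four maximal length-three chains, which are the $2$-simplices $T_1 = [x, xy, xyy]$, $T_2 = [y, xy, xyy]$, $T_3 = [y, yy, xyy]$, and $T_4 = [y, yy, yyy]$; there are no higher simplices. Under the Type 3 WOC rule, the six vertex weights are $a, b, ab, b^2, ab^2, b^3$, and taking LCMs gives all the edge and triangle weights. In particular $T_1, T_2, T_3$ each have weight $ab^2$ and $T_4$ has weight $b^3$.

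Next, I would perform the four elementary collapses $([x, xyy], T_1)$, $([xy, xyy], T_2)$, $([y, xyy], T_3)$, $([yy, yyy], T_4)$ in this order. One checks that in each case the removed edge and the removed triangle have equal weight ($ab^2$ in the first three, $b^3$ in the last), and the removed edge is a free face of the specified triangle at the moment of its collapse — the earlier collapses have already removed the other triangle in which it initially sat (for instance, $[xy, xyy]$ originally lies in both $T_1$ and $T_2$, but after collapsing $T_1$ it lies only in $T_2$). Theorem \ref{thm:sameweight} then guarantees that each collapse preserves weighted homology. The resulting complex $L$ is a tree with vertex set $P_S$ and the five edges $[x, xy], [y, xy], [y, yy], [y, yyy], [yy, xyy]$.

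It remains to compute the weighted homology of $L$. Since $L$ is $1$-dimensional, $H_i(L, w) = 0$ for $i \geq 2$. A leaf-peeling argument shows that $\ker \partial_1^w = 0$: for any element of the kernel, the unique edge incident to each leaf must have vanishing coefficient, and iterating kills all coefficients. Hence $H_1(L, w) = 0$. For $H_0$, the five weighted boundaries impose the relations $xy = bx$, $xy = ay$, $yy = by$, $yyy = b^2 y$, $xyy = a\,yy$ in $C_0 / \Ima \partial_1^w$; eliminating $xy, yy, xyy, yyy$ leaves $H_0(L, w) \cong \Z\langle x, y\rangle / \langle bx - ay\rangle$, and Smith normal form on the $1 \times 2$ matrix $(b, -a)$ yields $\Z \oplus \Z/\gcd(a, b)$. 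The main obstacle is the organizational one of verifying that the four prescribed pairs can indeed be realized as a sequence of valid elementary collapses in the stated order; once this bookkeeping is settled, the weight-matching and Smith-normal-form steps are routine.
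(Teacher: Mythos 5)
Your proof is correct, but it takes a genuinely different route from the paper's. The paper constructs an explicit discrete Morse function $f:\Delta(P_S)\to\R$ with values in $\{1,\dots,5\}$, checks that every simplex in $f^{-1}((2,5])$ is non-critical and $w$-simple, and invokes Theorem \ref{thm:maincollapse} to collapse $K=K(5)$ all the way down to the level subcomplex $K(2)$, which is the two-edge path on $\{x,\,xy,\,y\}$; the concluding Smith-normal-form computation is then on a $3\times 2$ weighted boundary matrix. You bypass the Morse function entirely and apply Theorem \ref{thm:sameweight} four times, once per triangle. This is legitimate: the weights match in each pair ($ab^2$ for the first three, $b^3$ for the last), and your chosen ordering correctly ensures that each removed edge is a free face at the moment of its removal (the point about $[xy,xyy]$ lying in both $T_1$ and $T_2$ is exactly the bookkeeping that needs checking, and you handle it). The price is that you stop at a six-vertex spanning tree rather than a three-vertex path, so your $H_0$ computation works with a $6\times 5$ boundary matrix instead of a $3\times 2$ one --- still routine, and both your elimination to $\Z\langle x,y\rangle/\langle bx-ay\rangle$ and the leaf-peeling argument for $H_1=0$ are sound (the latter uses that all weights are nonzero and $\Z$ is an integral domain). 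The paper's approach illustrates the weighted discrete Morse machinery, which is the point of the section, and leaves less linear algebra at the end; yours is more elementary and self-contained, resting only on the basic elementary-collapse theorem.
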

\begin{proof}
The order complex $K=\Delta(P_S)$ is shown in Figure \ref{fig:ocxyyy}.

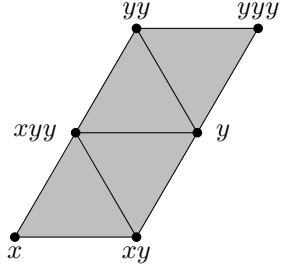
\begin{figure}[htbp]
\begin{tikzpicture}[scale=0.8]
\fill[fill=lightgray]
(0,0)  
-- (2,0)
-- (1,1.7);
\fill[fill=lightgray]
(1,1.73)  
-- (3,1.73)
-- (2,0);
\fill[fill=lightgray]
(1,1.73)  
-- (3,1.73)
-- (2,3.46);
\fill[fill=lightgray]
(4,3.46)  
-- (3,1.73)
-- (2,3.46);
\filldraw 
(0,0) circle (2pt) node[align=left,below] {$x$}
(2,0) circle (2pt) node[align=left,below] {$xy$}  
(1,1.73) circle (2pt) node[label=left:$xyy$] {}
(3,1.73) circle (2pt) node[label=right:$y$] {}
(2,3.46) circle (2pt) node[align=left,above] {$yy$}
(4,3.46) circle (2pt) node[align=left,above] {$yyy$};
\draw (0,0)--(2,0);
\draw (2,0)--(1,1.73);
\draw (1,1.73)--(0,0);
\draw (1,1.73)--(3,1.73)
(3,1.73)--(2,0);
\draw (1,1.73)--(2,3.46)
(2,3.46)--(3,1.73);
\draw (2,3.46)--(4,3.46)
(3,1.73)--(4,3.46);
\end{tikzpicture}
\caption{The order complex $K=\Delta(P_S)$.}
\label{fig:ocxyyy}
\end{figure}

Consider the discrete Morse function $f: K\to\R$ as follows:
\begin{itemize}
\item Simplices $\sigma$ such that $f(\sigma)=1$: $[x]$, $[xy]$, $[y]$
\item $\sigma$ such that $f(\sigma)=2$: $[x,xy]$, $[xy,y]$
\item $\sigma$ such that $f(\sigma)=3$: $[xyy]$, $[yy]$, $[xy,xyy]$, $[y,yy]$
\item $\sigma$ such that $f(\sigma)=4$: $[yyy]$, $[yy,yyy]$, $[xyy,y]$, $[y,xy,xyy]$
\item $\sigma$ such that $f(\sigma)=5$: $[x,xyy]$, $[xyy,yy]$, $[y,yyy]$, $[x,xy,xyy]$, $[y,yy,xyy]$, $[y,yy,yyy]$.
\end{itemize}

We note that all simplices in $f^{-1}((2,5])$ are non-critical and $w$-simple. Hence, by Theorem \ref{thm:maincollapse}, $K=K(5)\searrow K(2)$ is a collapse that preserves weighted homology. The level subcomplex $K(2)$ is shown in Figure \ref{fig:K2}.

\begin{figure}[htbp]
\begin{tikzpicture}[scale=0.8]
\filldraw 
(0,0) circle (2pt) node[align=left,below] {$x$}
(2,0) circle (2pt) node[align=left,below] {$xy$}  
(3,1.73) circle (2pt) node[label=right:$y$] {};
\draw (0,0)--(2,0)--(3,1.73);
\end{tikzpicture}
\caption{The level subcomplex $K(2)$.}
\label{fig:K2}
\end{figure}
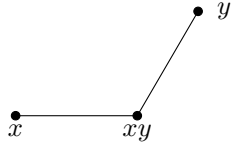

Hence, by observing $K(2)$, it is clear that $H_i(K,w)\cong H_i(K(2),w)\cong 0$ for $i\geq 1$. We calculate that
\begin{equation*}
\begin{split}
\partial_1^w ([x,xy])&=[xy]-b[x]\\
\partial_1^w ([xy,y])&=a[y]-[xy].
\end{split}
\end{equation*}

The matrix for the weighted boundary operator $\partial_1^w$ can be written as 
\begin{equation*}
[\partial_1^w]=\begin{pmatrix}-b &0\\1 &-1\\ 0& a\end{pmatrix}. 
\end{equation*}
By Prop.\ 8.1 in \cite{miller2009differential}, the nonzero diagonal entries for the Smith normal form of $[\partial_1^w]$ are 1 and $\gcd(a,b)$. Hence, we may conclude that $H_0(K,w)\cong\Z\oplus\Z/\gcd(a,b)$.
\end{proof}

\begin{prop}
Let $x$ be a letter in an alphabet $\mathcal{A}$. Consider the sequence $S=\underbrace{xx\dots x}_\text{$n$ times}$. Let $w(x)=a\in\Z\setminus\{0\}$. We extend $w$ to $\Delta(P_S)$ by the definition of Type 3 WOC. Then,
\begin{equation*}
H_i(\Delta(P_S),w)\cong\begin{cases}
\Z &\text{if $i=0$},\\
0 &\text{if $i\geq 1$}.
\end{cases}
\end{equation*}
\end{prop}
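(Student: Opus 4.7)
I would begin by identifying the complex: the proper substrings of $S$ are precisely $x^k := \underbrace{x\cdots x}_{k\text{ times}}$ for $1\leq k\leq n-1$, and since $x^j\preceq x^k$ iff $j\leq k$, the poset $P_S$ is a chain of length $n-1$. Hence $K:=\Delta(P_S)$ is the closed $(n-2)$-simplex on vertices $v_k:=x^k$. Under the Type 3 WOC convention, $w(v_k)=a^k$, so for any face $\sigma_I:=[v_{k_0},\dots,v_{k_m}]$ indexed by a nonempty $I=\{k_0,\dots,k_m\}\subseteq\{1,\dots,n-1\}$ with $k_0<\dots<k_m$,
\begin{equation*}
w(\sigma_I)=\LCM(a^{k_0},\dots,a^{k_m})=a^{\max I}
\end{equation*}
(up to a unit in $\Z$). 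In particular, the weight of any face depends only on $\max I$.

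The plan is to collapse $K$ all the way down to the single vertex $v_1$ through a sequence of weight-preserving elementary collapses. I would use the cone matching with apex $v_1$: pair each face $\sigma_I$ with $1\notin I$ to the face $\sigma_{I\cup\{1\}}$, leaving $\sigma_{\{1\}}=v_1$ as the only unmatched face. Since $\max(I)\geq 2$ whenever $1\notin I$ and $I\neq\emptyset$, we have $\max(I\cup\{1\})=\max(I)$, so both faces in each pair carry the same nonzero weight $a^{\max I}$. Theorem \ref{thm:sameweight} then guarantees that each such elementary collapse preserves weighted homology.

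The main technical step, which I would address next, is to perform these elementary collapses in a legal order. I would process the pairs in order of decreasing $|I|$. When the pair $(\sigma_I,\sigma_{I\cup\{1\}})$ is reached, any proper coface $\sigma_J$ of $\sigma_I$ other than $\sigma_{I\cup\{1\}}$ satisfies $J\setminus I \ni k$ for some $k\neq 1$; the matched partner of $\sigma_J$ is then either $\sigma_{J\setminus\{1\}}$ (if $1\in J$) or $\sigma_{J\cup\{1\}}$ (if $1\notin J$), and in either case the associated ``apex-free'' index set has cardinality strictly greater than $|I|$, so $\sigma_J$ has already been removed at an earlier stage. Hence $\sigma_{I\cup\{1\}}$ is the unique remaining proper coface of $\sigma_I$, and the elementary collapse is valid. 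After all pairs have been processed, only $v_1$ remains, so iterated application of Theorem \ref{thm:sameweight} yields $H_*(K,w)\cong H_*(\{v_1\},w)$. Since $w(v_1)=a\neq 0$, this gives $H_0\cong\Z$ and $H_i\cong 0$ for $i\geq 1$, as claimed.

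The subtlest part of the argument will be this free-face verification, i.e., checking that at every stage the count of remaining cofaces of $\sigma_I$ drops to exactly one. An essentially equivalent alternative would be to cook up an explicit discrete Morse function realizing this matching and invoke Theorem \ref{thm:maincollapse} together with the observation that $K(f(v_1))=\{v_1\}$; however, the direct matching-based collapse seems cleanest here.
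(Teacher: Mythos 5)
Your proof is correct, and it rests on exactly the same combinatorial core as the paper's: the cone matching with apex $v_1=x$, together with the observation that under the Type 3 convention $w(\sigma_I)=\LCM(a^{k})_{k\in I}=a^{\max I}$ (up to sign), so that $\sigma_I$ and $\sigma_{I\cup\{1\}}$ always carry the same nonzero weight. The difference is one of packaging. The paper encodes this matching in an explicit discrete Morse function $f$ (setting $f(v_1)=1$, numbering the simplices not containing $v_1$ lexicographically by dimension, and giving each simplex of $B$ the value of its $v_1$-free facet), verifies that $f$ is a discrete Morse function with $[v_1]$ as the unique critical cell and all other cells $w$-simple, and then invokes Theorem \ref{thm:maincollapse} to get $K=K(b)\searrow K(1)=\{[v_1]\}$. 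You instead execute the collapse by hand, ordering the pairs by decreasing $|I|$ and applying Theorem \ref{thm:sameweight} at each step; your free-face verification (every proper coface of $\sigma_I$ other than $\sigma_{I\cup\{1\}}$ belongs to a pair with strictly larger apex-free index set, hence has already been removed) is correct and is precisely the bookkeeping that Theorem \ref{thm:maincollapse} would otherwise absorb. Your route is marginally more elementary in that it avoids constructing and certifying a discrete Morse function; the paper's route is more in the spirit of the section, which is meant to illustrate the weighted discrete Morse machinery, and scales better when one wants to read off critical cells rather than just the final answer. Two cosmetic points: you should state $n\geq 2$ (for $n=1$ the complex is empty), and since $\LCM$ in $\Z$ is only determined up to sign when $a<0$, the cleanest citation for the weight condition is Theorem \ref{thm:strongerassoc} (associates) rather than Theorem \ref{thm:sameweight}, though with the usual convention that $\LCM$ returns a fixed (positive) value the equality $w(\sigma_I)=w(\sigma_{I\cup\{1\}})$ holds literally and Theorem \ref{thm:sameweight} suffices.
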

\begin{proof}
We note that the order complex $K=\Delta(P_S)$ is a $(n-2)$-simplex with the $n-1$ vertices $v_1=x$, $v_2=xx$, $\dots$, $v_{n-1}=\underbrace{xx\dots x}_\text{$n-1$ times}$.

Consider the discrete Morse function $f: K\to\R$ defined as follows. We let $f([v_1])=1$. Let $A_k$ be the set of $k$-simplices that do not contain the vertex $v_1$, where $0\leq k\leq n-3$. We arrange $A_k$ in lexicographical order and define $f([v_2])=2$, $f([v_3])=3$, $\dots$, $f([v_{n-1}])=n-1$. We now proceed to define $f$ on $A_k$, in ascending order of $k$. Similarly, we arrange $A_k$ in lexicographical order and define the value of $f$ on each $k$-simplex to be the smallest integer value that has not been used so far in the definition of $f$. Now, let
\begin{equation*}
B=\{[v_1,v_{i_0},v_{i_1},\dots,v_{i_l}]\mid [v_{i_0},v_{i_1},\dots,v_{i_l}]\in A_l\ \text{for some $0\leq l\leq n-3$}\}.
\end{equation*}
For $\sigma=[v_1,v_{i_0},\dots,v_{i_l}]\in B$, we define 
\begin{equation}
\label{eq:abpair}
f(\sigma)=f([v_{i_0},\dots,v_{i_l}]).
\end{equation}

It can be verified that $f$ is indeed a discrete Morse function. If $\alpha=[v_1]$, then for any $\beta=[v_1,v_j]$, we have $f(\beta)=f([v_j])>f([v_1])$. If $\alpha^{(k)}\in A_k$, then any $\beta^{(k+1)}>\alpha^{(k)}$ is either in $A_{k+1}$ or in $B$. If $\beta\in A_{k+1}$, then $f(\beta)>f(\alpha)$ by the definition of $f$. If $\beta\in B$, then $\beta$ is uniquely determined and $f(\beta)=f(\alpha)$. We note that any $\gamma^{(k-1)}<\alpha^{(k)}$ must be in $A_{k-1}$, and hence $f(\gamma)<f(\alpha)$ by the definition of $f$. A similar analysis follows in the case of $\alpha\in B$.

Let $b=f([v_1,\dots,v_{n-1}])$. Note that all simplices in $f^{-1}((1,b])$, i.e.\ all simplices other than $[v_1]$, are either in $A_k$ for some $k$ or in $B$. By Equation \ref{eq:abpair}, we see that each simplex in $f^{-1}((1,b])$ is non-critical.

Let $\alpha^{(k)}\in f^{-1}((1,b])$. Suppose $\gamma^{(k-1)}<\alpha$ and $f(\gamma)\geq f(\alpha)$. By the definition of $f$, it implies that $\alpha=[v_1,v_{i_0},\dots,v_{i_l}]\in B$ and $\gamma=[v_{i_0},\dots,v_{i_l}]\in A_l$. By the definition of Type 3 WOC, we have
\begin{equation*}
w(\gamma)=w(\alpha)=\LCM(w(v_{i_0}),\dots,w(v_{i_l})).
\end{equation*}
Therefore, all simplices in $f^{-1}((1,b])$ are $w$-simple.

Hence, by Theorem \ref{thm:maincollapse}, $K=K(b)\searrow K(1)$ is a collapse that preserves weighted homology. We observe that $K(1)=\{[v_1]\}$, and hence the result follows.
\end{proof}

\section*{Acknowledgements}
We would like to thank Professor Rafal Komendarczyk for introducing his work \cite{rafal2018dna} to us through private communication. We wish to thank the referees most warmly for numerous suggestions that have improved the exposition of this paper.

\bibliographystyle{amsplain}
\bibliography{jabref9}

\end{document}